\documentclass[letterpaper, 10 pt, conference]{ieeeconf}  

\IEEEoverridecommandlockouts                              

\overrideIEEEmargins                                      



\usepackage{amsmath}
\usepackage{amsfonts}
\usepackage{amssymb}
\usepackage{amsthm}
\usepackage{color}
\newcommand\hGA{\mathbf{\Gamma}}

\theoremstyle{plain}
\newtheorem{lemma}{Lemma}
\newtheorem{thm}{Theorem}
\newtheorem{corollary}{Corollary}
\newtheorem{proposition}{Proposition}

\theoremstyle{definition}

\newtheorem{example}{Example}

\theoremstyle{remark}
\newtheorem{remark}{Remark}

\newcommand{\expt}[1]{\ensuremath{\langle #1 \rangle}{}}

\newcommand{\gks}{{\sc gksl}}
\newcommand{\diag}{\operatorname{diag}}
\newcommand{\posdiag}[1]{ \mathrm D({#1})}
\newcommand{\pos}[1]{\mathcal D({#1})}

\newcommand{\fk}{\mathfrak{k}}

\newcommand{\bK}{\mathbf{K}}

\newcommand{\su}{\mathfrak{su}}

\title{\LARGE \bf
Reachable Sets from Toy Models to Controlled Markovian Quantum Systems*
}

\author{Gunther Dirr$^{1}$, Frederik vom Ende$^{2,3}$ and Thomas Schulte-Herbr{\"u}ggen$^{2,3}$
\thanks{*\;This work was supported in part 
by the Bavarian excellence network {\sc enb}
via the \mbox{International} PhD Programme of Excellence
{\em Exploring Quantum Matter} ({\sc exqm}).}
\thanks{$^{1}$G.D.~is with the Faculty of Mathematics, University of W\"urzburg, Emil-Fischer-Strasse 40, 97074 W\"urzburg, Germany
}%
\thanks{$^{2}$F.v.E.~and T.S-H.~are with the Department of Chemistry, Technische Universit{\"a}t M{\"u}nchen, Lichtenbergstrasse 4, 85747 Garching and with the}%
\thanks{$^{3}$Munich Centre for Quantum Science and Technology (MCQST),  Schellingstrasse 4, 80799~M{\"u}nchen, Germany
        {\tt\small frederik.vom-ende@tum.de}}%
\\{\small \em date: \today} 
}

\begin{document}


\maketitle

\pagestyle{headings}
\setcounter{page}{1}
\pagenumbering{arabic}


\begin{abstract}
In the framework of bilinear control systems, we present reachable sets of coherently controllable open 
quantum systems with switchable coupling to a thermal bath of arbitrary temperature $T \geq 0$. 
The core problem boils down to studying points in the standard simplex amenable to two types 
of controls that can be used interleaved:
\begin{itemize}
\item[(i)] permutations within the simplex,\vspace{2pt}
\item[(ii)] contractions by a dissipative one-parameter semigroup.\vspace{1pt}
\end{itemize}
Our work illustrates how the solutions of the core problem pertain to the reachable set of the original
controlled Markovian quantum system. We completely characterize the case $T=0$ and present inclusions
for $T>0$.
\end{abstract}

\section{INTRODUCTION}
Quantum systems theory and control engineering is a corner stone to unlock the potential of many
quantum devices in view of emerging technologies~\cite{DowMil03,Roadmap2015}.

To ensure well-posedness of a large class of control tasks, it is advisable
to check first whether the desired target state is within the reachable set of the
dynamic system. 
Here we show how reachability problems of (finite-dimensional) Markovian open
quantum systems can be reduced to studying hybrid control systems on the standard
simplex of $\mathbb R^n$. Our starting point is a bilinear control system \cite{Elliott09} of the
form
\begin{equation}\label{eq:bilin}
\dot \varphi(t) = -(A + \sum\nolimits_j u_j(t) B_j) \varphi(t)\,,\quad \varphi(0) =\varphi_0\,,
\end{equation}
where as usual $A$ denotes an uncontrolled drift, while the control terms consist of (piecewise
constant) control amplitudes $u_j(t)\in\mathbb R$ and control operators $B_j$. The
state $\varphi(t)$ may be thought of as (vectorized) density operator. The corresponding system Lie algebra, 
which provides the crucial tool for analysing controllability and accessibility questions, reads 
$\fk:=\expt{A, B_j\,|\, j=0,1,\dots,m}_{\sf Lie}$.

For ``closed'' quantum systems, i.e.~systems which do not interact with their environment,
the matrices $A$ and $B_j$ involved are skew-hermitian and thus it is known 
\cite{SJ72,JS72,Bro72,dAll08,DiHeGAMM08}
that the reachable set of \eqref{eq:bilin} is given by the orbit of the initial state under the
action of the dynamical systems group $\bK:=\expt{\exp \fk}$, provided $\bK$ is a compact subgroup 
of the unitary group.

More generally, for ``open'' systems undergoing Markovian dissipation,
the reachable set takes the form of a (Lie) semigroup orbit \cite{DHKS08}. -- Here we address an 
intermediate scenario with coherent controls $\{B_j\}_{j=1}^m$ and a bang-bang switchable 
dissipator $B_0$, the latter being motivated by recent experimental progress
\cite{Mart09,Mart13,Mart14,McDermott_TunDissip_2019} as described in~\cite{BSH16}.


\section{Specification of the Toy Model}
\label{sec:toy_model}

Under these assumptions and some further invariance condition
one can simplify the reachability analysis of \eqref{eq:bilin} to a core problem (dubbed \/`toy model\/' henceforth)
on the standard simplex
\begin{equation*}
\Delta^{n-1}:=\big\{ x\in\mathbb R_+^n\,|\, {\textstyle\sum}_{i=1}^nx_i=1\big\}\,.
\end{equation*}
In order to make the main features match the quantum dynamical context (described in Sec.~\ref{sec:control} below), 
let us fix the following stipulations for the toy model: Its controls shall amount to permutation matrices acting
instantaneously on the entries of $x(t)$ and a continuous-time one-parameter semigroup $(e^{-tB_0})_{t\in\mathbb R^+}$
of stochastic maps with a unique fixed point $d$ in $\Delta^{n-1}$. As $(e^{-tB_0})_{t\in\mathbb R^+}$ results from the 
restriction of the bang-bang switchable dissipator $B_0$, with abuse of notation we will denote its infinitesimal
generator again by $B_0$. The `\/{\em equilibrium state}\/' $d$ will be defined explicitly in Eq.~\eqref{eq:gibbs_vec}
by the system parameters and the absolute temperature $T\geq 0$ of an external bath.

Altogether, this yields what we call the `\/toy model\/' in the sequel. 
More precisely, these stipulations
suggest the following hybrid/impulsive toy model $\Lambda$ on $\Delta^{n-1} \subset \mathbb R^n$, 
cf.~\cite{book_impulsive89,Leela1991,book_HybridSytems96}: 
\begin{equation}\label{eq:control-simplex_evolution}
\begin{split}
&\dot{x}(t)  = -B_0 x(t)\,,\quad x(t_k) = x_k\,, \quad t \in [t_k,t_{k+1})\,,\\
& x_0  \in \Delta^{n-1}\,, \quad x_{k+1} = \pi_k e^{-(t_{k+1}-t_k)B_0}x_k\,, \quad k\geq 0\,,
\end{split}
\end{equation}
where the upper line describes the continuous-time evolution and the lower line the discrete-time part.
The switching sequence $0 =: t_0 \leq t_1 \leq t_2 \leq \dots$ and the permutation matrices $\pi_k$ are regarded
as controls for \eqref{eq:control-simplex_evolution}. For simplicity, we assume that the switching points do not 
accumulate on finite intervals. The reachable sets of $\Lambda$
\begin{equation*}
\mathfrak{reach}_\Lambda(x_0) := \{x(t) \,|\,
\text{$x(\cdot)$ is a solution of \eqref{eq:control-simplex_evolution}, $t \geq 0$}\}
\end{equation*}
allow for the following characterisation
\begin{equation*}
\mathfrak{reach}_\Lambda(x_0) =  {\mathcal S}_\Lambda x_0\,,
\end{equation*}
where ${\mathcal S}_\Lambda \subset \mathbb R^{n\times n}$ is the contraction semigroup generated 
by $(e^{-tB_0})_{t\in\mathbb R_+}$ and the set of all permutation matrices $\pi$.

\section{Main Results}

Henceforth, let $\hGA$ stand for a \gks-operator acting on complex $n \times n$ matrices, see
Eq.~\eqref{eq:lindblad_V}. Then $B_0$ in Eq.~\eqref{eq:bilin} can be regarded as its matrix representation
(obtained, e.g., via the Kronecker formalism \cite[Chap.~4]{HJ2}). If $\hGA$ leaves the set of diagonal matrices
invariant---a case we are primarily interested in---we denote by abuse of notation the corresponding matrix 
representation (obtained via $x \mapsto \operatorname{diag}(x)$)
again by $B_0$. If different $\hGA$ are involved we write $B_0(\hGA)$ to avoid confusion.
--- Within this picture, our main results can be sketched as follows.

\medskip
For $n\in\mathbb N$, consider the $n$-level toy model $\Lambda_0$ (cf.~Sec.~\ref{sec:toy_model}) 
with controls by permutations as above and an infinitesimal generator $B_0$ which results from a
dissipative coupling to a bath of temperature \mbox{$T=0$} 
(i.e.~$\hGA = \hGA_0$ is generated by a single $V = \sigma_+$, cf.~Eq.~\eqref{eq:sigma}).

\begin{thm}\label{thm_1}
Then the closure of the reachable set of any initial state $x_0 \in \Delta^{n-1}$ under the dynamics of
$\Lambda_0$ exhausts the full standard simplex, i.e.
$$
\overline{\mathfrak{reach}_{\Lambda_0}(x_0)}=\Delta^{n-1}\,.
$$
\end{thm}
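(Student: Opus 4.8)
The plan is to prove the statement by induction on the dimension $n$, after first making the $T=0$ dynamics fully explicit. First I would compute the action of the \gks-generator $\hGA_0$ with $V=\sigma_+$ on the populations $x$ (the diagonal of $\rho$): one finds that $-B_0$ acts as a pure downward cascade, $\dot x_1=x_2$, $\dot x_k=x_{k+1}-x_k$ for $2\le k\le n-1$, and $\dot x_n=-x_n$, whose unique fixed point is a vertex of the simplex, which after relabeling the levels we take to be $e_1$ (that is, $d=e_1$). Consequently each $e^{-tB_0}$ is a stochastic contraction with $e^{-tB_0}x\to e_1$ as $t\to\infty$ for every $x\in\Delta^{n-1}$, and on the tangent space $T=\{v:\sum_i v_i=0\}$ the generator $-B_0$ has only eigenvalues of strictly negative real part. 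I abbreviate $R:=\overline{\mathfrak{reach}_{\Lambda_0}(x_0)}$.

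Next I would record two elementary closure properties of $R$ that carry the structural load. Since any admissible word in $\{e^{-tB_0}\}$ and the permutations may be post-composed with a further permutation $\pi$ or a further flow $e^{-sB_0}$, the set $\mathfrak{reach}_{\Lambda_0}(x_0)$, and hence its closure $R$, is invariant under every $\pi$ and forward-invariant under every $e^{-sB_0}$. Running the drift alone gives $e^{-tB_0}x_0\to e_1$, so $e_1\in R$, and permutation-invariance promotes this to the statement that all vertices $e_1,\dots,e_n$ lie in $R$. A short continuity argument further shows that $a\in R$ implies $\overline{\mathfrak{reach}_{\Lambda_0}(a)}\subseteq R$; in particular $\overline{\mathfrak{reach}_{\Lambda_0}(e_1)}\subseteq R$.

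The inductive heart has two parts (the base case $n=1$ being trivial). For the boundary: the facet $\{x_n=0\}\cap\Delta^{n-1}$ is invariant under $e^{-tB_0}$ (because $\dot x_n=-x_n$) and under the subgroup $S_{n-1}$ fixing the index $n$, and the restricted dynamics is precisely the $(n-1)$-level toy model. Since $e_1$ lies on this facet, the induction hypothesis gives that the closure of its in-facet reachable set is the whole facet; as this set is contained in $\mathfrak{reach}_{\Lambda_0}(e_1)$, we conclude $\{x_n=0\}\cap\Delta^{n-1}\subseteq R$, and by permutation-invariance every facet, hence all of $\partial\Delta^{n-1}$, lies in $R$. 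For the interior: given $y\in\operatorname{int}\Delta^{n-1}$ I would exploit that the time-reversed flow $e^{+tB_0}$ expands along $T$, so $t\mapsto e^{tB_0}y$ leaves the bounded set $\Delta^{n-1}$ at some finite first-exit time $t^\ast$ with $z:=e^{t^\ast B_0}y\in\partial\Delta^{n-1}\subseteq R$. Forward-invariance then yields $y=e^{-t^\ast B_0}z\in R$. Thus $\operatorname{int}\Delta^{n-1}\subseteq R$, and since $R$ is closed and contained in $\Delta^{n-1}$ we obtain $R=\Delta^{n-1}$.

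The part I expect to be the main obstacle, and the one genuine idea, is the interior step: reaching interior points looks hard because the only available drift is contractive and irreversible, so no recurrence is at hand. The resolution is to notice that although $e^{+tB_0}$ is not an admissible control, it is a legitimate analytic device for writing each interior target as the forward-cascade image $e^{-t^\ast B_0}z$ of a boundary point $z$ that is already known to be reachable, the expansion of the reversed flow guaranteeing that such a $z$ occurs in finite time. I would stress that this route needs neither an accessibility or Lie-algebra-rank computation nor any convexity of the reachable set; it uses only the two closure properties, the induction over faces, and the spectral fact that $-B_0$ contracts $T$.
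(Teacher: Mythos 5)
Your proposal is correct and, at its core, follows the same route as the paper: relaxation under $e^{-tB_0}$ to the vertex $e_1$ (Lemma~\ref{lemma_row_conv}), followed by a backward-flow argument that writes every target as the forward image of a point on a lower-dimensional face, combined with permutations and an induction over dimension (Lemma~\ref{lemma_for_thm_temp_0}); your spectral justification that the reversed flow must exit the simplex in finite time is a welcome elaboration of what the paper only asserts in its sketch. One step needs repair, however: the restriction of $B_0(\hGA_0)$ from \eqref{eq:6b} to the invariant facet $\{x_n=0\}\cap\Delta^{n-1}$ is the cascade matrix with coefficients $c_j=j(n-j)$ for $j=1,\dots,n-2$, which is \emph{not} the generator of the $(n-1)$-level $\sigma_+$ model (whose coefficients are $j(n-1-j)$), so the induction hypothesis as you state it does not literally apply to the restricted dynamics. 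The fix is to run the induction over the whole class of matrices \eqref{eq:matrix_A} with arbitrary positive $c_j$ --- exactly the generality in which Lemmas~\ref{lemma_row_conv} and~\ref{lemma_for_thm_temp_0} are formulated. A minor further difference: because you induct on the closure statement, you forgo the exact finite-time reachability from $e_1$ recorded in Remark~\ref{rem_reach_exact}; Theorem~\ref{thm_1} does not need it, but it is reused in the proof of Theorem~\ref{thm_2}.
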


Moving from a single $n$-level system (qu{\em d}it) 
to a tensor product of $m$ such $n$-level systems gives diagonal states
$x_0 \in \Delta^{n^m-1} \subset ({\mathbb R}^n)^{\otimes m}$. If the bath of temperature $T=0$ is coupled to
just one (say the last) of the $m$ qu{\em d}its, $\hGA_0$ is generated by $V :=I_{n^{m-1}}\otimes\sigma_+$ in Eq.~\eqref{eq:lindblad_V} and one obtains the following generalization.

\begin{thm}\label{thm_2}
The statement of Theorem~1 holds analogously for all initial states
$x_0\in\Delta^{n^m-1}$.
\end{thm}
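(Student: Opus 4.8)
The plan is to reduce the $m$-qudit statement to the single-qudit Theorem~\ref{thm_1} by exploiting the product structure of the generator. In the diagonal representation, $V=I_{n^{m-1}}\otimes\sigma_+$ yields $B_0=I_{n^{m-1}}\otimes B_0^{(1)}$, where $B_0^{(1)}$ is the single-qudit generator of Theorem~\ref{thm_1}; hence $e^{-tB_0}=I_{n^{m-1}}\otimes e^{-tB_0^{(1)}}$ acts block-diagonally on the $N:=n^{m-1}$ blocks of size $n$ indexed by the first $m-1$ qudits. I would record three facts for repeated use: (i) $e^{-tB_0}$ preserves the block marginals $p_\beta:=\sum_i x_{(\beta,i)}$; (ii) it fixes the ground coordinate $g_\beta$ of every block and, as $t\to\infty$, collapses each block onto its ground, so the limit $P_\infty:=\lim_{t\to\infty}e^{-tB_0}$ lies in $\overline{\mathcal S_\Lambda}$; and (iii) on states supported within a single block the pair (intra-block permutations, $e^{-tB_0}$) reproduces the single-qudit model $\Lambda_0$ of Theorem~\ref{thm_1}, the remaining blocks being empty and hence inert.

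First I would establish a concentration lemma: from any $x_0$ the closure of the reachable set contains a vertex of $\Delta^{n^m-1}$. Applying $P_\infty$ moves all mass onto the $N$ ground coordinates; one then gathers any $n$ occupied grounds into the $n$ slots of a single block by a permutation and applies $P_\infty$ again to merge them into that block's ground. Each such round lowers the number of occupied coordinates, and after finitely many rounds a single vertex $e_v$ survives. Since $\overline{\mathfrak{reach}_{\Lambda_0}(x_0)}$ is invariant under all of $S_{n^m}$, it then contains every vertex.

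The spreading step I would split into marginals and conditionals. To realize an arbitrary block-marginal vector $(p_\beta)_\beta$, start from a vertex and work one block at a time: inside the active block use Theorem~\ref{thm_1} to place a prescribed fraction of its mass on a non-ground coordinate, move that fraction by a permutation into an empty coordinate of the target block, and let $e^{-tB_0}$ with $t\to\infty$ deposit it on the target ground. Because every block other than the active one keeps its mass parked at its ground—a fixed point of the dissipator—and because block marginals are invariant under $e^{-tB_0}$, these operations never disturb the marginals already built; iterating yields, in closure, the ground-supported state $\sum_\beta p_\beta\,e_{g_\beta}$ for any admissible $(p_\beta)$. It remains to install the within-block conditionals $y^{(\beta)}$. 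Here the intra-block permutations of distinct blocks act on disjoint coordinates and combine into one element of $S_{n^m}$, while $e^{-tB_0}$ subjects all blocks to the same single-qudit contraction; the system thus behaves like $N$ independent copies of $\Lambda_0$ driven by a common dissipation clock but steered by independent permutations.

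The main obstacle is precisely this shared clock: Theorem~\ref{thm_1} supplies, for each block separately, a sequence of (permutation, dissipation-time) pairs reaching $y^{(\beta)}$, but a priori the required schedules differ across blocks. I would resolve this by passing to long, fine common schedules $\tau_1=\dots=\tau_K=\delta$ with $\delta\to0$ and $K\to\infty$, between which each block chooses its own permutation; in this limit the blocks decouple and the joint reachable set fills the product $(\Delta^{n-1})^{N}$ of per-block simplices, each factor being exhausted by Theorem~\ref{thm_1}. Combining the correct marginals with arbitrary conditionals then gives $\sum_\beta p_\beta\,e_\beta^{(1)}\otimes y^{(\beta)}=y$ up to closure, so $\overline{\mathfrak{reach}_{\Lambda_0}(x_0)}=\Delta^{n^m-1}$. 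Making this decoupling rigorous—equivalently, proving the common-clock product system is approximately controllable on $(\Delta^{n-1})^{N}$—is the crux; a small-size check ($n=2$, $N=2$, already solvable with $K=2$ steps) exhibits the mechanism and suggests an inductive refinement of the schedule for general $N$.
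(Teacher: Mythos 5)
Your overall architecture matches the paper's: exploit $B_0=I_{n^{m-1}}\otimes B_0(\hGA_0)$, first concentrate $x_0$ onto a vertex by alternating the $t\to\infty$ collapse with permutations that regroup the occupied ground coordinates (the paper does exactly this, iterating $m$ times to land on $e_1$ up to $\varepsilon$), then spread out from the vertex using single-qudit reachability block by block. The gap is in the last step, and you have located it yourself: installing the within-block conditionals requires running $n^{m-1}$ different (permutation, dissipation-time) schedules against a single shared dissipation clock. Your proposed resolution --- a fine common grid $\tau_1=\dots=\tau_K=\delta$ with $\delta\to 0$ --- cannot work as stated: no matter how finely you chop, every block accrues exactly the same total dissipation time $K\delta$, so refining the grid produces no decoupling whatsoever; the blocks do not become independent copies of $\Lambda_0$ in that limit, and you concede the claim is unproven.

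The paper's fix (Lemma~\ref{lemma_reach_oplus}) is the idea you already used in your marginals step but did not carry over: since $B_0(\hGA_0)e_1=0$, a block sitting at its ground state is \emph{inert} under dissipation for as long as one likes. One therefore takes, for each block $j$, the exact finite-time schedule of duration $t_j$ furnished by Lemma~\ref{lemma_for_thm_temp_0} and Remark~\ref{rem_reach_exact}, and staggers the start times: the block with the longest schedule starts at time $0$, every other block is parked at its ground until $t_{\max}-t_j$ and then runs its (shifted) schedule, so that all blocks finish simultaneously at $t_{\max}$. The global switching times are the union of the per-block switching times, with each block applying the identity permutation at instants not its own; this is legitimate because the dissipation is a semigroup and subdividing an interval does not change the flow. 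Note this also makes Step~2 \emph{exact} in finite time from $e_1$, whereas your scheme would at best be approximate. With this replacement your argument closes; without it, the crux of the theorem remains open.
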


In the sequel we refer to the standard concept and notation~($\prec$) of majorisation \cite{MarshallOlkin,Ando89}
and denote by
$d\in\Delta^{n-1}$ 
the unique attractive fixed point of the evolution generated by the dissipator $\hGA = \hGA_d$ for temperature
$T>0$ (see Sec.~\ref{sec:particular_models} with $\hGA_d$ comprising the generators $\sigma_-^d$ and $\sigma_+^d$ 
as in Prop.~\ref{thm_bath}).

\begin{thm}\label{thm_3}
Again allowing for permutations as controls interleaved with dissipation resulting from $B(\hGA_d)$  
 one obtains for the reachable set of the corresponding 
toy model $\Lambda_d$
\begin{equation*}
\mathfrak{reach}_{\Lambda_d}(d)\subseteq \lbrace x\in\Delta^{n-1}\,|\, x\prec d\rbrace\,.
\end{equation*}
\end{thm}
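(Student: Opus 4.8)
The plan is to use the description of the reachable set as the semigroup orbit $\mathcal{S}_\Lambda d$ and to show that the target region $\mathcal P_d:=\{x\in\Delta^{n-1}\,|\,x\prec d\}$ is left invariant by each of the two generating operations; since $d\in\mathcal P_d$ trivially, every reachable point then stays in $\mathcal P_d$ by induction over the switching sequence $0=t_0\le t_1\le\dots$. Invariance under the permutation controls is immediate, because majorisation depends only on the decreasingly ordered entries and $\pi d$ is a rearrangement of $d$; equivalently, by a classical theorem (Rado; cf.\ Birkhoff--von Neumann) $\mathcal P_d=\operatorname{conv}\{\pi d\,|\,\pi\in S_n\}$ is the permutohedron of $d$, which is permutation-invariant by construction. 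Hence the whole statement reduces to the single claim that the dissipative flow preserves $\mathcal P_d$, i.e.\ $e^{-sB_0}\,\mathcal P_d\subseteq\mathcal P_d$ for all $s\ge 0$.

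For the flow there are two equivalent routes. Since $e^{-sB_0}$ is linear, maps the simplex into itself and fixes $d$, it maps $\mathcal P_d=\operatorname{conv}\{\pi d\}$ into $\mathcal P_d$ as soon as it maps every vertex inside, so it suffices to prove $e^{-sB_0}\pi d\prec d$ for all $\pi$ and $s\ge 0$. Dually, one may verify forward invariance of the polytope $\mathcal P_d$ directly for the ODE $\dot x=-B_0x$ through a Nagumo-type subtangentiality argument: writing $\mathcal P_d$ as the intersection with $\Delta^{n-1}$ of the half-spaces $\sum_{i\in S}x_i\le\sum_{j=1}^{|S|}d_{[j]}$ (one for every index set $S$, with $d_{[j]}$ the decreasing rearrangement of $d$), it is enough to check on each active face that $\tfrac{d}{dt}\sum_{i\in S}x_i\le 0$, that is $\sum_{i\in S}(B_0x)_i\ge 0$. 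In view of the column-sum-zero, off-diagonally non-positive structure of $B_0$ this reads $(\text{flow out of }S)-(\text{flow into }S)\ge 0$: when the mass on the ``heaviest'' set $S$ is saturated at its majorisation bound, the equilibrium-restoring dynamics must drain $S$.

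The decisive structural input is detailed balance. Mere $d$-stationarity of a stochastic map does not suffice: already for $n=2$ a $d$-stationary stochastic matrix can send $\pi d$ strictly outside $\mathcal P_d$ (``overshoot''), and exactly those matrices are excluded from the one-parameter semigroup. Since $\hGA_d$ is built from the ladder operators $\sigma_\pm^d$, its population generator $B_0$ is a reversible (detailed-balance) birth--death generator: its symmetrisation $D^{-1/2}B_0D^{1/2}$ with $D=\operatorname{diag}(d)$ is a symmetric positive-semidefinite (graph-Laplacian-type) operator annihilating $\sqrt d$, whence $e^{-sB_0}$ is conjugate to a symmetric contraction with Perron vector $\sqrt d$ and spectrum in $[0,1]$. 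Detailed balance pins down the boundary transition rates across any cut $S$, and together with this positivity/semigroup property (which forbids the overshoot seen above) it should yield the sign condition $\sum_{i\in S}(B_0x)_i\ge 0$ on the active faces, equivalently $e^{-sB_0}\pi d\prec d$.

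The main obstacle is precisely the uniform verification of this sign condition over all active faces. The majorisation-active set $S$ is the combinatorial ``top-$k$'' index set, which changes as $x$ moves along a face and is non-unique at corners where several partial sums coincide; moreover the net flow across the cut couples to the full boundary, so generic Markovianity is not enough and one must genuinely exploit the nearest-neighbour ladder structure of $\sigma_\pm^d$ (turning each relevant cut into a single link) together with detailed balance to evaluate it. Reconciling the combinatorics of $S$ with the analytic subtangentiality---and, in the vertex formulation, controlling the partial sums of $e^{-sB_0}\pi d$ for every $\pi$ simultaneously---is where the real work lies; the temperature enters only through $d$ and the balanced rates, so the argument is expected to be uniform in $T>0$ once $d$ is fixed.
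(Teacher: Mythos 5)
Your reduction is exactly the one the paper uses: pass to the semigroup orbit, identify $\{x\in\Delta^{n-1}\,|\,x\prec d\}$ with the permutohedron $\operatorname{conv}\{\pi d\,|\,\pi\in S_n\}$, note that permutation invariance is free, reduce flow invariance by linearity and convexity to the vertices $\pi d$, and replace the finite-time statement by the subtangentiality condition $(I_n-\mu B_0)\pi d\prec d$, i.e.\ $\sum_{i\in S}(B_0\pi d)_i\ge 0$ for the top-$k$ cuts $S=\pi(\{1,\dots,k\})$. Up to that point you and the paper agree. But the step you explicitly defer --- the ``uniform verification of this sign condition,'' which you call ``where the real work lies'' --- is the entire content of the proof, and the mechanism you nominate for it cannot close the gap. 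Detailed balance of $B_0(\hGA_d)$ with respect to $d$ holds for \emph{every} $d>0$ by construction (from Prop.~\ref{thm_bath} one checks $a_k^2\,d_{k+1}=b_k^2\,d_k$ directly), yet the paper's closing example, with $d\propto(1,e^{-1/4},e^{-17/4})$ sorted and strictly positive, exhibits a vertex $\pi d$ whose flow leaves the permutohedron already at $t=1/10$. So reversibility plus the birth--death structure is provably insufficient, and your concluding claim that ``the argument is expected to be uniform in $T>0$ once $d$ is fixed'' is false: the theorem as proved requires the additional hypothesis, present in the paper's restatement of Thm.~\ref{thm_3} but absent from your write-up, that $d_{j+1}/d_j$ is constant, i.e.\ $d_j\propto\alpha^{j-1}$ for some $\alpha\in(0,1)$ (equidistant energy levels).

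The missing argument runs as follows. Decompose $\pi(\{1,\dots,k\})$ into its connected components (``blocks'') $\square_1,\dots,\square_q$ as in Lemma~\ref{lemma_perm_partition}; within each block the nearest-neighbour flows telescope, so only the two boundary terms survive, of the form $c_{b^-_j-1}\big((\pi d)_{b^-_j}-\alpha(\pi d)_{b^-_j-1}\big)$ and $c_{b^+_j}\big(\alpha(\pi d)_{b^+_j}-(\pi d)_{b^+_j+1}\big)$. Their non-negativity is exactly where the geometric form of $d$ enters: since $b^-_j\in\pi(\{1,\dots,k\})$ while $b^-_j-1\notin\pi(\{1,\dots,k\})$, the $\alpha$-exponent of $(\pi d)_{b^-_j}$ is strictly smaller than that of $(\pi d)_{b^-_j-1}$, whence $(\pi d)_{b^-_j}\ge\alpha\,(\pi d)_{b^-_j-1}$ for $\alpha\in(0,1)$, and similarly at $b^+_j$. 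This block-telescoping combinatorics, not detailed balance, is the decisive input; without it --- and without the equidistance hypothesis that makes it work --- your proof does not go through.
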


\noindent
The current results extend the qubit picture of~\cite{BSH16} to $n$-level systems, and 
even more generally to systems of $m$ qu{\em d}its.

For some mathematical statements contained within this manuscript we shall only sketch the ideas of 
how to prove them (denoted by \textit{``Sketch of Proof''}). 

\bigskip

\section{Relation of Controlled Quantum Systems to Toy Models}\label{sec:control}
Before proving the main theorems, we interpret our toy model in terms of open quantum systems.
Let $\pos{n}$ denote the set of all $n \times n$ density matrices (positive 
semi-definite matrices of trace 1) and $\mathcal L(\mathbb C^{n\times n})$ the set of all linear 
operators acting on complex $n\times n$-matrices. Then $\hGA\in\mathcal L(\mathbb C^{n\times n})$ 
with 
\begin{equation}\label{eq:lindblad_V}
\hGA(\rho):=\sum\nolimits_k\Big( \tfrac12 \big(V_k^\dagger V_k \rho+\rho V_k^\dagger V_k\big)-V_k\rho V_k^\dagger \Big)
\end{equation}
and arbitrary $V_k\in\mathbb C^{n\times n}$ will be called {\em Gorini-Kossakowski-Sudarshan-Lindblad} (\gks) {\em 
operator}  \cite{GKS76,Lind76}. It induces a linear ordinary differential equation ({\sc ode})
\begin{equation}\label{eq:diss_evolution}
\dot{\rho}(t)=-\hGA(\rho(t))\,,\quad \rho(0)=\rho_0\in\mathbb C^{n\times n}\,,
\end{equation}
whose solution reads $\rho(t)=e^{-t\hGA}\rho_0$ for all $t\in\mathbb R_+$. 
As $\hGA$ is of \gks-form, $(e^{-t\hGA})_{t\in\mathbb R_+}$ constitutes a one-parameter semigroup of completely 
positive, trace-preserving linear maps acting on $\mathbb C^{n\times n}$, cf.~\cite[Thm.~2.2]{GKS76}. This implies
that $(e^{-t\hGA})_{t\in\mathbb R_+}$ is a contraction semigroup which leaves $\pos{n}$ invariant. In particular,
$\|e^{-t\hGA}\| = 1$ for all $t\in\mathbb R_+$ when $\mathbb C^{n\times n}$ is equipped with the trace norm 
$\|\rho\|_1=\operatorname{tr}(\sqrt{\rho^\dagger \rho})$ \cite[Thm.~2.1]{wolf06}.\medskip

Next, let us extend \eqref{eq:diss_evolution} by coherent controls to a control system $\Sigma$ of the form
\begin{equation}\label{eq:control-diss_evolution}
\dot{\rho}(t)= -{\rm i}\Big[H_0 + \sum_{j=1}^m u_k(t) H_j,\rho(t)\Big] - \gamma(t) \hGA(\rho(t))\,,
\end{equation}
where all $H_j$ are (traceless) hermitian and $\gamma$ is a bang-bang switching function, i.e.~$\gamma(t) \in \{0,1\}$.
In general, an analytic description of the reachable sets of \eqref{eq:control-diss_evolution} is rather
challenging and in higher-dimensional cases almost impossible. However, there are a few scenarios which allow
partial results or even a complete characterization:

\begin{itemize}
\item 
If $\hGA(I_n) = 0$ (which is equivalent to $(e^{-t\hGA})_{t\in\mathbb R_+}$ being a
semigroup of {\em unital} quantum channels), then for any density matrix $\rho_0\in\pos{n}$ 
one has the estimate \cite{Ando89,Yuan10}
\begin{equation}\label{eq:reach-unital}
\mathfrak{reach}_\Sigma(\rho_0) \subseteq \{\rho \in \pos{n} \,|\, \rho \prec \rho_0 \}\,.
\end{equation}
\item
If $\hGA$ is of Kraus rank one, i.e.~$\hGA$ is generated by a single $V$,
and moreover if $V$ is normal, then one has (up to closure) equality in \eqref{eq:reach-unital} 
whenever the following assumption is satisfied:
\end{itemize}

\noindent
\textbf{Assumption UC+S:} The unitary part of \eqref{eq:control-diss_evolution} is {\em unitarily controllable},
i.e.~$\expt {{\rm i}H_j\,|\,j=0,\dots, m}_{\sf Lie} = \su(n)$ and the switching  function $\gamma(t)$ acts as
additional control. Here $\su(n)$ denotes the Lie algebra of all (traceless) skew-hermitian
$n \times n$ matrices.

\medskip
\noindent
 Alternatively, the assumption (FUC) below---which is unrealistic from the point of view of physics---leads to the same result.

\medskip
\noindent
\textbf{Assumption FUC:} The unitary part of \eqref{eq:control-diss_evolution} is {\em fully Hamiltonian controllable}
\cite{DHKS08}, i.e.~the Lie algebra generated by the control operators ${\rm i}H_j$ (without the drift ${\rm i}H_0$)
satisfies $\expt {{\rm i}H_j\,|\,j=1,\dots, m}_{\sf Lie} = \su(n)$, there are no restrictions on the controls
$u_j(t) \in \mathbb R$, and $\gamma(t) = \gamma_0 > 0$ for all $t \geq 0$.

 \medskip
\noindent
Recall that ``controllability of the unitary part'' is meant in the sense that the (lifted) 
bilinear system
\begin{equation*} 
\dot{X}(t) = - {\rm i}\Big(H_0 + \sum_{j=1}^m u_j(t) H_j \Big)X(t)\,,\quad X(0) = {\rm id}
\end{equation*}
is controllable on $SU(n)$ (i.e.\ the special unitary group). Clearly, this implies\footnote{If one
does require controllability of the unitary orbit for all initial states $\rho_0$ then it is actually equivalent
\cite{KDH12}.} that one can control \eqref{eq:control-diss_evolution} on the unitary orbit of $\rho_0$ if
dissipation is switched off. A necessary and sufficient condition for unitary controllability is the well-known
Lie-algebra rank condition \cite{SJ72,JS72,Bro72,Bro73} reading
\begin{equation*}
\expt{{\rm i}H_0, {\rm i}H_j\,|\, j=1,\dots,m}_{\sf Lie} = \su(n)
\quad(\text{or}\;=\mathfrak{u}(n))\,.
\end{equation*}

%

To properly connect this quantum control model with the initial toy model, we will need yet another assumption.

\medskip
\noindent
\textbf{Assumption IN:} The set of diagonal density matrices
$$
\posdiag{n}:=\{ \operatorname{diag}(x)\in\mathbb R^{n\times n}\,|\, x\in\Delta^{n-1}\}
$$
is invariant under the semiflow $(e^{-t\hGA})_{t\in\mathbb R_+}$ and thus $\Delta^{n-1}$ is also invariant under
the semiflow $(e^{-tB_0(\hGA)})_{t\in\mathbb R_+}$. 

\medskip
\noindent
Since $(e^{-t\hGA})_{t\in\mathbb R_+}$ is positive and trace preserving by construction, the invariance of 
$\posdiag{n}$ under $(e^{-t\hGA})_{t\in\mathbb R_+}$ boils down to the obvious condition that $\hGA$ maps 
diagonal matrices to diagonal matrices.

\medskip

\noindent
{\bf Conclusion for Quantum Systems:}
Finally, by means of UC+S (or FUC) and IN, it is easy to verify that the closure of the unitary orbit of 
$\mathfrak{reach}_\Lambda(x_0)$ (more precisely, the image of $\mathfrak{reach}_\Lambda(x_0)$ under the 
operator $x \mapsto \diag(x)$) is contained in the closure of the reachable set
$\mathfrak{reach}_\Sigma(U\diag(x_0)U^\dagger)$. We elaborate this idea further in Cor.~\ref{cor:toy_quantum} below.

\medskip
Other authors used quite similar ideas to investigate reachable sets of quantum-dynamical control systems
\cite{Khaneja01b,Yuan10,rooney2018}. In particular, in \cite{rooney2018} the authors restrict themselves 
to a subsimplex of the standard simplex (which results from a Weyl-chamber type of construction) in order 
to eliminate ambiguities which result from different orderings of the eigenvalues of a density matrix. 
Moreover, their setting is more general as they avoid the invariance condition IN. However, the resulting 
conditions are hard to verify for higher-dimensional systems.

\bigskip

\section{Toy Models with\\Unique Attractive Fixed Point}
\label{sec:particular_models}




\noindent Models with a unique attractive fixed point are of 
particular interest for applications. Thus we introduce the terminology \emph{relaxing} for $\hGA$, if there 
exists a $\rho_\infty\in \pos{n}$ such that
\begin{equation}\label{eq:relaxing_semigroup}
\lim_{t\to\infty} e^{-t\hGA}\rho = \rho_\infty
\end{equation}
for all $ \rho \in \pos{n}$. 

Our first results show that there exists a rich class of physically relevant models 
motivated by quantum dynamical qubit systems \cite[Eq.~(B30)]{BSH16} which are relaxing and satisfy 
the invariance condition IN.

\begin{lemma}\label{lemma_semigroup_relaxing}
Let $n\in\mathbb N$ be arbitrary and consider
\begin{equation*}
N_+:=\sum_{j=1}^{n-1}a_j e_{j}e_{j+1}^T \quad\text{and}\quad N_-:=\sum_{j=1}^{n-1}b_j e_{j+1}e_{j}^T
\end{equation*}
with arbitrary $a_1,\ldots,a_{n-1},b_1,\ldots,b_{n-1}\in\mathbb R$ and $(e_j)_{j=1}^n$ being the standard basis
of $\mathbb C^n$. Then the operator $\hGA_N$ induced by $V_1:=N_+$ and $V_2:=N_-$ via 
\eqref{eq:lindblad_V} satisfies the following:
\begin{itemize}
\item[(i)] $\hGA_N$ fulfills {\rm IN}. Moreover, its matrix representation 
on diagonal matrices (with respect to the standard identification $x \to \diag(x)$) is given by
\end{itemize}
\begin{equation}\label{eq:action_gamma_Delta}
\begin{split}
B_0 & = \sum_{j=1}^{n-1} a_j^2 (e_{j+1}-e_j)e_{j+1}^T + b_j^2 (e_j-e_{j+1})e_j^T\\
&= \begin{pmatrix} b_1^2&-a_1^2&&\\-b_1^2&a_1^2+b_2^2&-a_2^2&\\&-b_2^2&a_2^2+b_3^2&-a_3^2\\&&-b_3^2&\ddots \end{pmatrix}\in\mathbb R^{n\times n}\,.
\end{split}
\end{equation}
\begin{itemize}
\item[(ii)] If $a_1,\ldots,a_{n-1},b_1,\ldots,b_{n-1} \neq 0$ then $B_0$ is relaxing on $\Delta^{n-1}$, i.e.~there exists 
a unique $x_\infty\in\Delta^{n-1}$, $x_\infty>0$ such that $\lim_{t\to\infty}e^{-tB_0}x=x_\infty$ for all $x\in\Delta^{n-1}$.
\end{itemize}
\end{lemma}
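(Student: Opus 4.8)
The plan is to handle the two parts by different means: (i) is a direct computation, while (ii) rests on a spectral (Perron--Frobenius) argument.

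For part (i) I would simply evaluate the \gks-operator \eqref{eq:lindblad_V} on $\diag(x)$ with $V_1=N_+$ and $V_2=N_-$. Two families of terms appear: the anticommutator parts, governed by $V_k^\dagger V_k$, and the recycling parts $V_k\diag(x)V_k^\dagger$. Using $e_j^Te_k=\delta_{jk}$ one finds $N_+^\dagger N_+=\sum_j a_j^2\,e_{j+1}e_{j+1}^T$ and $N_-^\dagger N_-=\sum_j b_j^2\,e_je_j^T$, both diagonal, together with $N_+\diag(x)N_+^\dagger=\sum_j a_j^2 x_{j+1}\,e_je_j^T$ and $N_-\diag(x)N_-^\dagger=\sum_j b_j^2 x_j\,e_{j+1}e_{j+1}^T$, again diagonal. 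Since diagonal matrices commute, each anticommutator collapses to an ordinary product, so $\hGA_N(\diag(x))$ is diagonal; this is precisely \textrm{IN}. Collecting, for each fixed index, the coefficients of $x_{i-1},x_i,x_{i+1}$ (treating the two boundary rows $i=1$ and $i=n$ separately) then reproduces the tridiagonal matrix $B_0$ of \eqref{eq:action_gamma_Delta}.

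For part (ii) the key observation is that $Q:=-B_0$ is the generator of a continuous-time Markov chain: its off-diagonal entries $Q_{i+1,i}=b_i^2$ and $Q_{i,i+1}=a_i^2$ are nonnegative, and every column of $B_0$ sums to zero, i.e.\ $\mathbf 1^T B_0=0$ (which re-confirms invariance of $\Delta^{n-1}$). When all $a_j,b_j\neq 0$, every sub- and super-diagonal rate is strictly positive, so the transition graph is the connected path $1\,\text{--}\,2\,\text{--}\cdots\text{--}\,n$ and $Q$ is irreducible. I would then pass to $M:=cI-B_0$ with $c>\max_i (B_0)_{ii}$, which is entrywise nonnegative with strictly positive diagonal and irreducible, hence primitive. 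Perron--Frobenius yields a simple, strictly dominant eigenvalue with a strictly positive eigenvector; since $\mathbf 1^T M=c\,\mathbf 1^T$, this dominant eigenvalue must be $c$, and its positive right eigenvector $x_\infty$, normalised so that $\mathbf 1^T x_\infty=1$, lies in $\Delta^{n-1}$. Translating back, $0$ is a simple eigenvalue of $B_0$ with eigenvector $x_\infty>0$, while every other eigenvalue has strictly positive real part. Hence $e^{-tB_0}\to x_\infty\mathbf 1^T$, and for any $x\in\Delta^{n-1}$ (so $\mathbf 1^T x=1$) we obtain $e^{-tB_0}x\to x_\infty$, the claimed unique attractive fixed point. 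As a sanity check, solving the kernel equation $B_0 x_\infty=0$ by the forward recursion $a_i^2 (x_\infty)_{i+1}=b_i^2 (x_\infty)_i$ exhibits both positivity of $x_\infty$ and one-dimensionality of the kernel directly.

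The bookkeeping in (i)---matching the index shifts at the two boundary rows so that the entries $-a_i^2$, $-b_i^2$ and $a_{i-1}^2+b_i^2$ land in the correct positions---is the most error-prone, though entirely routine, step. The genuine content lies in (ii): the step I expect to be the main obstacle is justifying that $0$ is a \emph{simple} eigenvalue of $B_0$ with all remaining eigenvalues in the open right half-plane. This is exactly where the hypothesis $a_j,b_j\neq 0$ is indispensable, via irreducibility of $Q$; without it the chain decouples into non-communicating blocks and the fixed point need neither be unique nor strictly positive. Invoking Perron--Frobenius (equivalently, the ergodic theorem for finite irreducible Markov chains) is the cleanest route to the spectral gap that drives the convergence.
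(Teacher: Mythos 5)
Your proposal is correct and follows essentially the same route as the paper: part (i) by direct evaluation of the \gks-operator on diagonal matrices, and part (ii) by shifting to the nonnegative matrix $cI-B_0$, invoking Perron--Frobenius (the paper verifies primitivity via $(cI-B_0)^{n-1}>0$ rather than via irreducibility plus positive diagonal, but this is the same mechanism) and using $\mathbf 1^TB_0=0$ to identify the Perron root and conclude $e^{-tB_0}\to x_\infty\mathbf 1^T$.
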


\begin{proof}
Let $j,k\in\lbrace 1,\ldots,n\rbrace$ and $Y\in\mathbb C^{n\times n}$. A straightforward computation yields
\begin{align*}
\big(\hGA_N&(Y)\big)_{jk} = e_j^T\hGA_N(Y)e_k\\
= & \frac12(a_{j-1}^2+a_{k-1}^2+b_j^2+b_k^2)Y_{jk}\\
& -a_ja_kY_{(j+1)(k+1)}-b_{j-1}b_{k-1}Y_{(j-1)(k-1)}\,.
\end{align*}
This readily implies (i).
Statement (ii) can be shown via the Perron-Frobenius theorem as follows. Let $t>0$ be arbitrary. By 
\eqref{eq:action_gamma_Delta} there exists $c\in\mathbb R_+$ such that all entries
of $ctI_n-tB_0$ are non-negative (denoted by $ctI_n-tB_0 \geq 0)$. This is still true if we take any 
power of $ctI_n-tB_0$ and due to
$a_j,b_j \neq 0$, evidently, $(ctI_n-tB_0)^{n-1}>0$ (positive entries) so
$$
0<e^{ctI_n-tB_0}=e^{ctI_n}e^{-tB_0}=e^{ct}e^{-tB_0}
$$
and thus $e^{-tB_0}>0$. Furthermore, $e^{-tB_0}$ has spectral radius 
one---this follows from \cite[Thm.~8.1.22]{HJ1} due to ${\bf 1}^TB_0=0 $ which 
implies ${\bf 1}^T e^{-tB_0}={\bf 1}^T $, i.e.~$e^{-tB_0}$ leaves $\Delta^{n-1}$ invariant.
Moreover, one can show \cite[Thm.~8.2.11]{HJ1} that $0$ is a simple eigenvalue and every
other eigenvalue of $-tB_0$ has strictly negative real part. Using the Jordan canonical form of $-tB_0$ 
this readily implies convergence of $e^{-t B_0}$ to a matrix of rank one as $t\to\infty$. 
By an argument similar to the one given in Lemma \ref{lemma_row_conv} there exists $x_\infty\in\Delta^{n-1}$, $x_\infty>0$ such that $ e^{-tB_0}\to x_\infty{\bf 1}^T$ as $t\to\infty$, cf.~\cite[Thm.~8.2.11]{HJ1}.
\end{proof}
\noindent Equivalent to (ii) of the previous lemma is the statement that $\hGA_N$ is relaxing on $\posdiag{n}$. 
 In fact, one can show (cf.~\cite{Fagnola2015}) that $\hGA_N$ is actually 
relaxing on all of $\pos n$.

\medskip
Here and henceforth, 
let
\begin{equation}\label{eq:sigma}
\sigma_+ := \sum_{k=1}^{n-1}\sqrt{k(n-k)} e_ke_{k+1}^T
\quad\text{and}\quad
\sigma_- := (\sigma_+)^T
\end{equation}
be the ladder operators in spin-$j$ representation giving rise to $n=2j+1$ levels for half-integer (fermionic)
and integer (bosonic) spin quantum numbers $j\in\{\tfrac{1}{2},1,\tfrac{3}{2},2,\dots\;\}$.

\begin{proposition}\label{thm_bath}
Let $n\in\mathbb N$ and $d\in\Delta^{n-1}$, $d>0$. Moreover,
\begin{equation*}
\sigma_+^d:=\sum\nolimits_{k=1}^{n-1}\sqrt{k(n-k)}\cos(\theta_k) e_{k}e_{k+1}^T
\end{equation*}
and
\begin{equation*}
\sigma_-^d:=\sum\nolimits_{k=1}^{n-1}\sqrt{k(n-k)}\sin(\theta_k) e_{k+1}e_{k}^T\,,
\end{equation*}
where
\begin{equation}\label{eq:thermal_angle}
\theta_k:=\arccos\Big(\big({1+\frac{d_{k+1}}{d_k}}\big)^{-\frac{1}{2}} \Big)\in\Big(0,\frac{\pi}{2}\Big)
\end{equation}
for $k= 1,\ldots,n-1$. Then $\hGA_{d}$ induced by $V_1:=\sigma_+^d$ and $V_2:=\sigma_-^d$ via 
\eqref{eq:lindblad_V} satisfies {\rm IN} and the generated semigroup $(e^{-tB_0(\hGA_d)})_{t\in\mathbb R_+}$ is relaxing on $\Delta^{n-1}$ into $d$.
\end{proposition}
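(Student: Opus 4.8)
The plan is to reduce almost everything to Lemma~\ref{lemma_semigroup_relaxing} and then pin down the fixed point by a detailed-balance computation. First I would observe that $\sigma_+^d$ and $\sigma_-^d$ are precisely of the form $N_+$ and $N_-$ treated in Lemma~\ref{lemma_semigroup_relaxing}, with coefficients $a_k=\sqrt{k(n-k)}\cos\theta_k$ and $b_k=\sqrt{k(n-k)}\sin\theta_k$. Since $\theta_k\in(0,\tfrac{\pi}{2})$ and $k(n-k)>0$ for $1\le k\le n-1$, all the $a_k,b_k$ are strictly positive, in particular nonzero. Hence Lemma~\ref{lemma_semigroup_relaxing}(i) immediately yields that $\hGA_d$ satisfies IN and that its matrix representation on diagonal matrices is the tridiagonal $B_0=B_0(\hGA_d)$ of \eqref{eq:action_gamma_Delta}, while part~(ii) provides a unique strictly positive attractive fixed point $x_\infty\in\Delta^{n-1}$ of $(e^{-tB_0})_{t\in\mathbb R_+}$.

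It therefore remains only to identify $x_\infty$ with $d$. By uniqueness of the limit it suffices to verify that $d$ is stationary, i.e.\ $B_0 d=0$: if $B_0 d=0$ then $e^{-tB_0}d=d$ for all $t$, so applying the convergence statement of Lemma~\ref{lemma_semigroup_relaxing}(ii) to the initial point $x=d$ forces $x_\infty=d$. To check $B_0 d = 0$ I would first unpack \eqref{eq:thermal_angle}: it gives $\cos^2\theta_k=(1+d_{k+1}/d_k)^{-1}=d_k/(d_k+d_{k+1})$ and hence $\sin^2\theta_k=d_{k+1}/(d_k+d_{k+1})$. Substituting into $a_k^2=k(n-k)\cos^2\theta_k$ and $b_k^2=k(n-k)\sin^2\theta_k$ yields the key detailed-balance identity
\begin{equation*}
b_k^2\,d_k=a_k^2\,d_{k+1}=k(n-k)\,\frac{d_k d_{k+1}}{d_k+d_{k+1}}\,,\qquad 1\le k\le n-1\,.
\end{equation*}

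The final step is to feed this identity into the tridiagonal form of $B_0$. Reading off the rows of $B_0$ from \eqref{eq:action_gamma_Delta}, the $j$-th entry of $B_0 d$ for $2\le j\le n-1$ equals $-b_{j-1}^2 d_{j-1}+(a_{j-1}^2+b_j^2)d_j-a_j^2 d_{j+1}$, and the detailed-balance identity makes this telescope to zero; the boundary rows $j=1$ and $j=n$ vanish by the same identity for $k=1$ and $k=n-1$, respectively. Thus $d\in\ker B_0$, and by the uniqueness from Lemma~\ref{lemma_semigroup_relaxing}(ii) we conclude $x_\infty=d$, i.e.\ the semigroup relaxes into $d$.

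I do not expect a genuine analytic obstacle. The conceptual crux is the single observation that the defining relation \eqref{eq:thermal_angle} for the thermal angles $\theta_k$ is engineered precisely so that $a_k^2$ and $b_k^2$ satisfy detailed balance with respect to $d$; once this is recognized, the verification $B_0 d=0$ closes by a telescoping sum. The only care required is the bookkeeping of the tridiagonal indices at the two boundary rows.
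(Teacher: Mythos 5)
Your proposal is correct and follows essentially the same route as the paper: both reduce the statement to Lemma~\ref{lemma_semigroup_relaxing} with $a_k=\sqrt{k(n-k)}\cos\theta_k$ and $b_k=\sqrt{k(n-k)}\sin\theta_k$ (equivalently $a_k=\sqrt{k(n-k)d_k/(d_k+d_{k+1})}$, $b_k=\sqrt{k(n-k)d_{k+1}/(d_k+d_{k+1})}$) and then conclude by checking $B_0d=0$. Your write-up merely makes explicit the detailed-balance identity $b_k^2 d_k=a_k^2 d_{k+1}$ that the paper leaves as an ``immediate'' verification.
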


\begin{proof}
Obviously, we can apply Lemma \ref{lemma_semigroup_relaxing} with
$$
a_k:=\sqrt{\frac{k(n-k)d_k}{d_k+d_{k+1}}} \quad\text{and}\quad b_k:=\sqrt{\frac{k(n-k)d_{k+1}}{d_k+d_{k+1}}}
$$
for all $k= 1,\ldots,n-1$. Now the only thing left to prove is
$d\in\operatorname{ker}(B_0)$ which implies that $d$ is the 
unique attractive fixed point of $(e^{-tB_0})_{t \in \mathbb R_+}$. By means of \eqref{eq:action_gamma_Delta}
one immediately gets $B_0d=0$. 
\end{proof}

\subsection*{Fixed Points for Given (Bath) Temperatures }

Recall how the temperature $T > 0$ given as a macroscopic parameter of a bath  relates to the 
equilibrium state $\rho_\text{Gibbs}$ (henceforth called \emph{Gibbs state}) of an \mbox{$n$-level} quantum
system with Hamiltonian $H_0$ once the system is `\/opened\/' by coupling it to the bath and letting it
equilibrate.

In equilibrium, the quantum system is assumed to adopt the bath temperature in the sense that the Gibbs
state $\rho_\text{Gibbs}$ exhibits the same eigenbasis as $H_0$ and its corresponding eigenvalues can be
interpreted as populations of the energy levels of $H_0$ following the Boltzmann distribution: 
$$ 
\frac{\lambda(\rho_{\sf Gibbs})_{k}} {\lambda(\rho_{\sf Gibbs})_{k'}} = \frac{e^{-E_k/T} } {e^{-E_{k'}/T} }
$$
for $k,k'=1,2,\dots, n$ and $T>0$. This obviously
leads to 
$$
\rho_\text{Gibbs}=\frac{e^{-H_0/T}}{\operatorname{tr}(e^{-H_0/T})}
$$
(see, e.g., \cite{AlickiLendi07}). If $H_0$ is diagonal 
(which we will assume in the following w.l.o.g.) this boils down to $\rho_{\sf Gibbs}=\diag(d)$ with
\textit{Gibbs vector}
\begin{equation}\label{eq:gibbs_vec}
d := \frac{(e^{-E_k/T})_{k=1}^n}{\sum_{k=1}^n e^{-E_k/T}}\in\Delta^{n-1}\,.
\end{equation}
Note that different $T > 0$ and $H_0$ may lead to the same Gibbs state/vector.

The equilibration itself can be described as a Markovian relaxation process following the
\gks-equation~\eqref{eq:lindblad_V} with $V_1:=\sigma_+^d$ and $V_2:=\sigma_-^d$ given in Prop.~\ref{thm_bath}.
To this end, $\sigma_+^d$ and $\sigma_-^d$ are designed to guarantee that $\rho_{\sf Gibbs}=\diag(d)$ is
the unique fixed point\footnote{Of course, if $H_0$ and thus $\rho_{\sf Gibbs}$ are not diagonal one has to
adjust the construction of $\sigma_+^d$ and $\sigma_-^d$ by replacing $e_k$ by the corresponding
eigenvector to $H_0$.\label{footnote_H_0_gen_diag}} of the equilibration. 
Roughly speaking, $\sigma_+^d,\sigma_-^d$ can be interpreted to model the transition rates between
neighbouring energy levels. For this to work without ``physically'' forbidden jumps we have to require
that the energy levels of $H_0$ and thus the resulting Gibbs vector $d=d(T)\in\Delta^{n-1}$ are ordered: 
w.l.o.g.~we assume $E_1\leq\ldots\leq E_n$ to be increasing and therefore $d_1\geq d_2\geq\ldots\geq d_n$
to be decreasing.

In the sequel, we want to analyse how $(e^{-t\hGA_d})_{t \in \mathbb R_+}$ (cf.~Prop.~\ref{thm_bath}) behaves for
different choices of the Gibbs vector, i.e.~for different $H_0$ and at different temperatures $T$. 
The following scenarios are of special interest:

\medskip
\noindent
\textbf{Equidistant energy levels:} If the neighbouring ratios $\frac{d_{k+1}}{d_{k}}$ 
are constant for all $k$ (which obviously corresponds to equidistant energy levels $E_k$) so $\theta_k=\text{const}=:\theta$ in \eqref{eq:thermal_angle}, then the generators $\sigma_+^d,\sigma_-^d$ become $\cos(\theta)\sigma_+$, 
$\sin(\theta)\sigma_-$.

\medskip
\noindent
\textbf{High-temperature limit:} The case $d = \mathbf{1}/n$ (obtained via taking the limit $T\to\infty$ in \eqref{eq:gibbs_vec}) yields $\cos(\theta_k)=\sin(\theta_k)=\frac{1}{\sqrt{2}}$ for all $k= 1,\ldots,n-1$ so the generators $\sigma_+^d,\sigma_-^d$ 
become $\sigma_+,\sigma_-$ (up to a global factor). 

\medskip
\noindent 
\textbf{Low-temperature limit:} If the entries of $d$ are sorted and distinct, i.e.~$d_1>d_2>\ldots>d_n$, then $d$ becomes $e_1$ when taking the limit $T\to 0^+$ in \eqref{eq:gibbs_vec}---hence $\sigma_-^d\to 0$ and $\sigma_+^d\to \sigma_+$ so it is enough to consider only one generator.

\bigskip
\section{Proof of Main Results}
\label{sec:main_results}

\subsection{Reachability Results in the Low-Temperature Limit}\label{subsec:reach_zero}
\subsubsection{Global Noise}\label{subsec:global}
In this section, we consider noise on a single qudit in the low-temperature limit, i.e.~a single $n$-level system 
with generator $ \sigma_+$ in \eqref{eq:lindblad_V}. \vspace{6pt}

\noindent\textbf{Theorem \ref{thm_1}.} {\it Let $n\in\mathbb N$ be arbitrary and consider $\hGA_0$ induced by a
single generator $\sigma_+$ via \eqref{eq:lindblad_V}. Then for the toy model $\Lambda_0$ from Sec.~\ref{sec:toy_model}
with $B_0(\hGA_0)$, the closure of the reachable set of any initial state $x_0 \in \Delta^{n-1}$ exhausts the whole
standard simplex, i.e.}
\begin{equation*}
\overline{\mathfrak{reach}_{\Lambda_0}(x_0)}=\Delta^{n-1}\,.
\end{equation*}

\noindent
To prove this, we first need the following auxiliary results.

\begin{lemma}\label{lemma_row_conv}
Let $n\in\mathbb N$ and $c_1,\ldots,c_{n-1}>0$. Then for
\begin{equation}\label{eq:matrix_A}
A:=\begin{pmatrix}
0&-c_1&0&\ldots&0 \\
0&c_1&-c_2&\ddots&\vdots \\
\vdots&\ddots&c_2&\ddots&0 \\
\vdots & &\ddots &\ddots&-c_{n-1} \\
0&\ldots&\ldots&0&c_{n-1}
\end{pmatrix}\in\mathbb R^{n\times n}\,,
\end{equation}
one has $\lim_{t\to\infty}\exp(-tA) = e_1\mathbf{1}^T$, so the resulting matrix has ones in the first
row and all other entries are zero.
\end{lemma}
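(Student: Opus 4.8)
The plan is to exploit the triangular structure of $A$ to read off its spectrum directly and then to identify $\lim_{t\to\infty}e^{-tA}$ with the spectral projection onto $\ker A$. First I would observe that $A$ is upper bidiagonal: its diagonal is $(0,c_1,\dots,c_{n-1})$ and its superdiagonal is $(-c_1,\dots,-c_{n-1})$, with all remaining entries zero. Hence its characteristic polynomial is $\lambda\prod_{j=1}^{n-1}(\lambda-c_j)$, so the eigenvalues of $A$ are exactly $0,c_1,\dots,c_{n-1}$. Because every $c_j>0$, the value $0$ is a simple root (it does not coincide with any $c_j$) and all remaining eigenvalues are strictly positive; equivalently, $-A$ has a simple eigenvalue $0$ while every other eigenvalue has strictly negative real part.

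Next I would pass to the Jordan canonical form of $-A$. The $0$-eigenvalue, being simple, contributes a single $1\times1$ block whose exponential is the constant $1$, whereas each Jordan block belonging to a nonzero eigenvalue $-c_j$ yields a factor of the form $(\text{polynomial in }t)\cdot e^{-c_j t}$, which tends to $0$ as $t\to\infty$. This is precisely the decay mechanism already invoked in the proof of Lemma~\ref{lemma_semigroup_relaxing}. Consequently $e^{-tA}$ converges as $t\to\infty$ to the spectral projection $P$ onto $\ker A$ along the sum of the generalized eigenspaces of the nonzero eigenvalues.

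It then remains to compute $P$ explicitly. Since the first column of $A$ vanishes, $Ae_1=0$, so $e_1$ spans the one-dimensional kernel and is the right $0$-eigenvector. Moreover the column sums of $A$ vanish, i.e.\ $\mathbf 1^T A=0$, so $\mathbf 1$ is the corresponding left $0$-eigenvector. For a simple eigenvalue the associated spectral projection is the normalized rank-one matrix $P=\frac{e_1\mathbf 1^T}{\mathbf 1^T e_1}=e_1\mathbf 1^T$, which has ones in the first row and zeros elsewhere. This gives the asserted limit.

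The computations here are all routine; the only point deserving genuine care is the passage to the limit via the Jordan form, where one must confirm both that the $0$-block stems from a truly simple eigenvalue (ensured by $c_j>0$, so that $0\notin\{c_1,\dots,c_{n-1}\}$) and that the polynomial-times-decaying-exponential contributions from possibly non-diagonalizable blocks at the nonzero eigenvalues still vanish. I do not anticipate a serious obstacle: the triangular form renders the spectrum transparent, and the left/right eigenvector identification pins down the limit matrix without further effort.
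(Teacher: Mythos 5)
Your argument is correct, but it follows a different route from the paper's. You read the spectrum off the upper-triangular structure, observe that $0$ is an algebraically simple eigenvalue of $A$ (since all $c_j>0$) while every other eigenvalue is strictly positive, and conclude via the Jordan form that $e^{-tA}$ converges to the spectral projection onto $\ker A$, which you then identify as $e_1\mathbf 1^T/(\mathbf 1^Te_1)$ from the right eigenvector $Ae_1=0$ and the left eigenvector $\mathbf 1^TA=0$. This is essentially the same machinery the paper deploys for Lemma~\ref{lemma_semigroup_relaxing}, and it generalizes immediately to any generator with a simple zero eigenvalue and remaining spectrum in the open right half-plane. The paper's own proof of Lemma~\ref{lemma_row_conv} instead avoids spectral theory: it splits $A$ into the block form $\bigl(\begin{smallmatrix}0 & A_{12}\\ 0 & A_{22}\end{smallmatrix}\bigr)$, solves the resulting triangular {\sc ode} for $\Phi(t)=e^{-tA}$ by variation of parameters to get the explicit expression $\Phi_{12}(t)=-A_{12}A_{22}^{-1}+A_{12}A_{22}^{-1}e^{-tA_{22}}$, and uses the Hurwitz property of $-A_{22}$ together with $\mathbf 1^TA=0$ to identify the limit. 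That computation buys a closed-form transient (useful if one cares about rates or the finite-time behaviour exploited later in Lemma~\ref{lemma_for_thm_temp_0}), whereas your version is shorter and more conceptual. The two points you flag as needing care --- simplicity of the zero eigenvalue and the decay of polynomial-times-exponential terms from possibly non-diagonalizable blocks at repeated $c_j$ --- are exactly the right ones, and both are handled correctly; note also that $\mathbf 1^Te_1=1\neq0$ so the normalization in your projection formula is well defined.
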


\begin{proof}
Obviously the above statement is related to, but not a special case of Lemma \ref{lemma_semigroup_relaxing}.
Consider the following block-decomposition
\begin{equation*}
A = \begin{pmatrix} 0& A_{12}\\ 0&A_{22}\end{pmatrix} \quad\text{with}\quad 
A_{22}\in\mathbb R^{(n-1) \times (n-1)}
\end{equation*}
and note that $t \mapsto \Phi(t) := \exp(-tA)$ satisfies the {\sc ode} $\dot{\Phi}(t) = -A \Phi(t)$
with $\Phi(0) = I_n$. Now decomposing $\Phi(t)$ in the same way as $A$ and taking into account
that $\Phi(t)$ satisfies the above {\sc ode} readily yields the following representation
\begin{equation*}
\Phi(t) = \begin{pmatrix} \Phi_{11}(t) & \Phi_{12}(t)\\0 & \Phi_{22}(t) \end{pmatrix} 
\end{equation*}
with $\Phi_{22}(t) = \exp(-tA_{22})$ and $\Phi_{11}(t) = 1$. Finally, via the variation
of parameters formula we obtain
\begin{equation*}
\begin{split}
\Phi_{12}(t) & = -\int_0^t A_{12} \exp(-(t-s)A_{22})\,{\rm d}s \\
& = -A_{12} \big[A^{-1}_{22}\exp(-(t-s)A_{22})\big]_{s=0}^{s=t} \\[2mm]
& = - A_{12}A^{-1}_{22} + A_{12}A^{-1}_{22}\exp(-tA_{22})\,.
\end{split}
\end{equation*}
As $-A_{22}$ is obviously a Hurwitz matrix we conclude 
\begin{equation*}
\lim_{t\to\infty}\exp(-tA) = \lim_{t\to\infty} \Phi(t) = 
\begin{pmatrix} 1 & -A_{12} A^{-1}_{22} \\0 & 0\end{pmatrix} 
\end{equation*}
and the identity $\mathbf{1}^T A = 0$ implies the desired result.
\end{proof}

\begin{lemma}\label{lemma_for_thm_temp_0}
Let $n\in\mathbb N$ be arbitrary and let $A\in\mathbb R^{n\times n}$ be given by \eqref{eq:matrix_A} for some 
$c_1,\ldots,c_{n-1}>0$. 
Then for any $x\in\Delta^{n-1}$ there exist $t_1,\ldots,t_{n-1}\in\mathbb R_+$ and permutation matrices
$\pi_1,\ldots,\pi_{n-1}\in\mathbb R^{n\times n}$ such that
$$
\big(e^{-t_{n-1}A}\pi_{n-1}\ldots e^{-t_1A}\pi_1\big)e_1 = x\,.
$$
\end{lemma}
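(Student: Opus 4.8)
The plan is to induct on $n$, peeling off the top coordinate with a single additional block. The base case $n=1$ is immediate ($\Delta^0=\{e_1\}$, empty product), and for any $n$ the vertex $x=e_1$ is reached by taking all $t_i=0$ and $\pi_i=\mathrm{id}$, since $Ae_1=0$. So I would fix $n\geq 2$ and a target $x\in\Delta^{n-1}$ with $x\neq e_1$, and try to produce a point $w\in\Delta^{n-2}\times\{0\}$, a permutation $\pi$, and a time $t^*\geq 0$ with $x=e^{-t^*A}\pi w$; the inductive hypothesis then supplies the remaining $n-2$ blocks that reach $w$ from $e_1$.

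The key device is to run the flow backwards. Since $\mathbf{1}^TA=0$, one has $\mathbf{1}^Te^{tA}x=1$ for all $t$, so $e^{tA}x$ stays on the affine hull of the simplex and lies in $\Delta^{n-1}$ exactly when it is nonnegative. First I would show $e^{tA}x$ leaves $\Delta^{n-1}$ in finite time: letting $p\in\{2,\dots,n\}$ be the largest index with $x_p>0$, the vector $x$ sits in the invariant subspace $\{y_j=0:\ j>p\}$, on which the top coordinate obeys $\dot y_p=-c_{p-1}y_p$, so that $(e^{tA}x)_p=x_p\,e^{c_{p-1}t}\to\infty$; as every coordinate of a simplex point is at most $1$, this forces $e^{tA}x\notin\Delta^{n-1}$ for large $t$. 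Hence $T:=\{t\geq 0\mid e^{tA}x\geq 0\}$ is compact and nonempty, and I would set $t^*:=\max T$ and $w':=e^{t^*A}x$. Maximality together with continuity shows $w'$ has a vanishing coordinate, say $w'_q=0$; choosing $\pi=(q\,n)$ (the identity if $q=n$) and $w:=\pi w'$ gives $w\in\Delta^{n-2}\times\{0\}$ and $x=e^{-t^*A}\pi w$.

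To close the induction I would note that $\{y_n=0\}$ is invariant under the flow and that the restriction of $A$ to it is precisely the $(n-1)$-chain \eqref{eq:matrix_A} with constants $c_1,\dots,c_{n-2}>0$; thus $w$ is reachable from $e_1$ in $n-2$ blocks whose permutations fix coordinate $n$, and prepending the block $(\pi,t^*)$ yields $x=e^{-t^*A}\pi\,e^{-t_{n-2}A}\pi_{n-2}\cdots e^{-t_1A}\pi_1 e_1$, i.e.\ exactly $n-1$ blocks. The main obstacle is the finite-exit-time claim, since everything else is bookkeeping; it is settled by the autonomy of the top coordinate, equivalently by the upper-triangular structure of $A$, whose spectrum $\{0,c_1,\dots,c_{n-1}\}$ has the simple eigenvalue $0$ spanned by $e_1$, so that $\|e^{tA}x\|\to\infty$ for every $x$ not proportional to $e_1$. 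A secondary point to verify is that the zero coordinate of $w'$ may always be routed to position $n$, which is harmless because any transposition $(q\,n)$ is an admissible control.
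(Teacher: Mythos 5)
Your argument is correct and follows essentially the same route as the paper's (sketched) proof: run the flow backwards from $x$ using $\mathbf{1}^TA=0$ and the invariance of the faces $\Delta^{m-1}\times\{0_{n-m}\}$, show the backward trajectory must exit the simplex in finite time, stop at the last nonnegative point, permute its vanishing coordinate to the last slot, and induct on the dimension; you supply the details (autonomy of the top nonzero coordinate, compactness of the admissible time set) that the paper leaves implicit. Only a cosmetic remark: the displayed ODE $\dot y_p=-c_{p-1}y_p$ is the forward-flow equation while the solution $x_pe^{c_{p-1}t}$ is for the backward flow $e^{tA}$ --- state one convention consistently.
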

\begin{proof}[Sketch of Proof]
Note that $\mathbf{1}^TA=0$ guarantees that the hyperplane $\mathbf{1}^T x =1$ is invariant under the flow
$(e^{-tA})_{t\in\mathbb R}$. Moreover, due to the upper triangular structure of $A$, lower-dimensional faces of
$\Delta^{n-1}$ of the form
\begin{align*}
\Delta^{n-1}_{m-1} := \Delta^{m-1}\times \{0_{n-m}\} =\lbrace (y,0_{n-m}) \,|\,y\in\Delta^{m-1}\rbrace 
\end{align*}
are left invariant, too. Now, for $x \neq e_1$ one can consider the backward evolution of $x\in\Delta^{n-1}$
and check that, eventually, the trajectory hits a face of $\Delta^{n-1}$ which can be rotated into 
$\Delta^{n-1}_{n-2}\simeq\Delta^{n-2}$ via some permutation $\pi_{n-1}$. Applying this procedure inductively 
$n-1$ times concludes the proof.
\end{proof}

\begin{proof}[Sketch of the Proof of Thm.~\ref{thm_1}]
By Lemma \ref{lemma_semigroup_relaxing}
\begin{equation}\label{eq:6b}
B_0(\hGA_0)=\sum\nolimits_{j=1}^{n-1} j(n-j) (e_{j+1}-e_j)e_{j+1}^T
\end{equation}
so we may apply Lemma \ref{lemma_row_conv} and \ref{lemma_for_thm_temp_0} to $B_0$. The former one,
in particular, implies that for arbitrary $y\in\mathbb R^n$
one has
\begin{equation}\label{eq:noise_inf_time}
e^{-t B_0}(y)\overset{t\to\infty}\longrightarrow(\mathbf{1}^T y )e_1
\end{equation}
and thus we find $t\geq 0$ such that
\begin{equation}\label{eq:relax_approx_1}
\|({\bf 1}^T y)e_1-e^{-sB_0}(y)\|_1<\varepsilon\quad\text{ for all }s\geq t\,.
\end{equation}

Now, let $\varepsilon > 0$ and $x_0,x\in\Delta^{n-1}$. The remaining proof consists of the following
two steps shown here:
\begin{equation}\label{eq:steps_idea}
x_0 \overset{\text{Step }1}\longrightarrow e_1  \overset{\text{Step }2}\longrightarrow x\,.
\end{equation}
We have to find $ x_F\in \mathfrak{reach}_{\Lambda_0}( x_0)$ s.t.~$\| x- x_F\|_1<\varepsilon$.
Step 1 is about relaxation of the diagonal system to the ground state $e_1$ by applying 
$e^{-tB_0}$ in the limit $t\to\infty$, cf.~\eqref{eq:noise_inf_time} and \eqref{eq:relax_approx_1}. 
Step 2 exploits the fact that from the ground state $e_1$, one can reach 
any other diagonal state $ x$ via $(e^{-tB_0})_{t \in \mathbb R_+}$ and suitable permutations $\pi$
in finite time (i.e.~within $\mathfrak{reach}_{\Lambda_0}$), cf.~Lemma \ref{lemma_for_thm_temp_0}. This is sufficient to perform the scheme suggested 
in \eqref{eq:steps_idea} with arbitrary precision so $ x\in\overline{\mathfrak{reach}_{\Lambda_0}(x_0)}$.
\end{proof}

\begin{remark}\label{rem_reach_exact}
Be aware that Step 2 in the proof of Thm.~\ref{thm_1} is ``exact'' in the sense that starting from the ground
state $e_1$ (in the model $\Lambda_0$), one can reach every element of $\Delta^{n-1}$ in finite time, cf.~Lemma \ref{lemma_for_thm_temp_0}.
\end{remark}\medskip

\subsubsection{Local Noise Coupling}
In this section, we consider local noise of temperature zero and a finite number of qudits, i.e.~a ``chain'' 
of $n$-level systems (of length $m$) with generators of the form $ I\otimes \sigma_+$ in \eqref{eq:lindblad_V}. 

\medskip

\noindent\textbf{Theorem \ref{thm_2}.} {\it 
Let $m,n\in\mathbb N$ be arbitrary and let $\hGA_{0,loc}$ 
be solely generated by $ I_{n^{m-1}}\otimes\sigma_+$ via \eqref{eq:lindblad_V}.
Then for toy model $\Lambda_{0,loc}$ from Section \ref{sec:toy_model} with $B_0(\hGA_{0,loc})$,
the closure of the reachable set of any initial state $ x_0\in\Delta^{n^m-1}$ exhausts the whole standard simplex, i.e.}
$$
\overline{\mathfrak{reach}_{\Lambda_{0,loc}}( x_0)}=\Delta^{n^m-1}\,.
$$

For the proof of this theorem, the following auxiliary result is of importance.

\begin{lemma}\label{lemma_reach_oplus}
Let $k\in\mathbb N$, $\alpha_1,\ldots,\alpha_k\in\mathbb N\setminus\{1\}$ and $Y_j\in\mathbb R^{\alpha_j\times\alpha_j}$
for $j=1,\ldots,k$. Consider the toy models $\Lambda_1,\ldots,\Lambda_k$ obtained from Section 
\ref{sec:toy_model} by replacing $B_0$ by $Y_1,\ldots,Y_k$, respectively, and  
assume the following:\vspace{2pt}
\begin{itemize}
\item[(a)] Starting from the ground state of the individual systems, every other state can be reached (in finite time).
More precisely, $\mathfrak{reach}_{\Lambda_j}(e_1)=\Delta^{\alpha_j-1}$ for all $j$.\vspace{3pt}
\item[(b)] $Y_je_1=0$ for all $j=1,\ldots,k$.\vspace{2pt}
\end{itemize}
Then the toy model $\Lambda_{\text{diag}}$ from Sec.~\ref{sec:toy_model} with
$$B_0 :=\operatorname{diag}(Y_1,Y_2,\ldots,Y_k)
\in\mathbb R^{(\alpha_1+\ldots+\alpha_k)\times (\alpha_1+\ldots+\alpha_k)}$$
admits
\begin{equation}\label{eq:reach_oplus}
\mathfrak{reach}_{\Lambda_\text{diag}}(e_1)=\Delta^{\alpha_1+\ldots+\alpha_k-1}\,.
\end{equation}
\end{lemma}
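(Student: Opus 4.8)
The plan is to reduce the general statement to the case $k=2$ by induction on $k$ and then to solve the two-block problem by a ``simultaneous shaping'' argument. For the induction step I would write $B_0=\operatorname{diag}(Y_1,Y')$ with $Y':=\operatorname{diag}(Y_2,\ldots,Y_k)$. The block $Y'$ again fulfils (b), since $Y'e_1=(Y_2e_1,0,\ldots,0)^T=0$, and it fulfils (a) by the induction hypothesis applied to $Y_2,\ldots,Y_k$ (the permutations available to the associated model $\Lambda'$ are exactly those acting on the last $\alpha_2+\cdots+\alpha_k$ coordinates). The base case $k=1$ is precisely assumption (a). Thus it suffices to treat two blocks $A:=Y_1$ (size $\alpha$) and $C:=Y'$ (size $\beta$), each satisfying (a) and (b).

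Fix a target $x=(x^{(1)},x^{(2)})\in\Delta^{\alpha+\beta-1}$ with block masses $p:=\mathbf 1^Tx^{(1)}$ and $q:=\mathbf 1^Tx^{(2)}$, $p+q=1$. The first step is to reach the ``doubly relaxed'' state $(p\,e_1,q\,e_1)$. Since $\alpha\ge 2$, assumption (a) drives $e_1$ to $p\,e_1+q\,e_2\in\Delta^{\alpha-1}$ in finite time inside the first block; a single transposition exchanging coordinate $2$ of the first block with the ground coordinate $\alpha+1$ of the second block then yields $(p\,e_1,q\,e_1)$. By (b) both blocks now sit at a multiple of their ground state and are therefore \emph{frozen} under $(e^{-tB_0})_{t\ge0}=\operatorname{diag}(e^{-tA},e^{-tC})$.

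From here I would shape the two blocks simultaneously. The key observation is that, if no cross-block permutations are used, a global control word acts on the two blocks independently, except that every flow $e^{-tB_0}$ contributes the \emph{same} time $t$ to each block, while the interspersed permutations may be chosen block-internally and independently. Hence shaping $A$ from $p\,e_1$ to $x^{(1)}$ by a sequence with flow times $(t_i)_{i=1}^\ell$ simultaneously subjects block $C$ to the word $e^{-t_\ell C}\rho_\ell\cdots e^{-t_1 C}\rho_1$ with $\rho_i$ of my choosing. I would therefore first pre-shape block $C$ (while $A$ is still frozen at $p\,e_1$, using (a) inside $C$) to a reachable state $\tilde x^{(2)}\in q\,\Delta^{\beta-1}$ for which this prescribed-time word returns exactly $x^{(2)}$, and only then run the shaping of $A$; the two phases combine into one admissible trajectory of $\Lambda_{\text{diag}}$ that ends at $x$.

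The main obstacle is precisely this last matching. The naive choice $\tilde x^{(2)}=e^{\,TC}x^{(2)}$ (undoing a single net flow of total time $T=\sum_i t_i$) need not lie in the simplex, because the backward flow $e^{\,TC}$ is not positivity preserving. I expect to resolve this by exploiting the two degrees of freedom that remain: the block-$C$ permutations $\rho_i$ inserted between the prescribed flow segments, and the fact that both flows are relaxing toward the ground vertex with exact finite-time reachability from it (cf.\ Remark~\ref{rem_reach_exact}). Concretely, I would argue that, as the parameters of the block-$A$ shaping vary over all sequences reaching $x^{(1)}$, the set of attainable block-$C$ outputs sweeps out all of $q\,\Delta^{\beta-1}$, e.g.\ by a continuity/intermediate-value argument, so that $x^{(2)}$ is met exactly; already the case $\alpha=\beta=2$ exhibits the compensation landing precisely on the target. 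If only the closure of the reachable set is needed---which is all that Theorem~\ref{thm_2} ultimately requires---this matching may instead be performed approximately, and the positivity constraint becomes harmless.
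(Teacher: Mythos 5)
Your reduction to $k=2$ and your first step (reaching $(p\,e_1,q\,e_1)$ from $e_1$ via assumption (a) inside the first block followed by one cross-block transposition) are fine and agree with the paper. The gap is in your second step. You arrange matters so that block $C$ must be \emph{pre-shaped} to a state $\tilde x^{(2)}$ which the block-$A$ control word, with its prescribed flow times $(t_i)_{i=1}^\ell$, maps exactly onto $x^{(2)}$, and you concede that the existence of such a $\tilde x^{(2)}$ is ``the main obstacle,'' to be settled by an unproven sweeping / intermediate-value claim. That claim is not justified: the naive candidate $e^{\,TC}x^{(2)}$ leaves the simplex, as you note; surjectivity of the prescribed word onto $q\,\Delta^{\beta-1}$ as the pre-state and the $\rho_i$ vary is exactly what would need proof; and a one-parameter continuity argument cannot by itself hit a target in a $(\beta-1)$-dimensional set. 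Retreating to approximate matching would moreover only give a statement about closures, whereas the lemma asserts exact finite-time reachability---which is what Theorem~\ref{thm_2} invokes (via Remark~\ref{rem_reach_exact} and condition (a)).

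The matching problem is avoidable, and this is where the paper's argument differs. Let $t_j$ be the duration of the word driving $(\mathbf 1^Tx^{(j)})e_1$ to $x^{(j)}$ inside block $j$, and assume w.l.o.g.\ $t_1\ge t_2$. Run block $A$'s word on $[0,t_1]$; during $[0,t_1-t_2]$ leave block $C$ alone, which by assumption (b) simply stays put, since $q\,e_1$ is a fixed point of $(e^{-tC})_{t\ge0}$; then on $[t_1-t_2,t_1]$ run block $C$'s word in parallel. Because block-diagonal permutations of the big system restrict to an arbitrary, independently chosen pair of permutations on the two blocks, and because any flow segment may be subdivided by inserting identity permutations, the union of the two switching grids yields a single admissible control word for $\Lambda_{\text{diag}}$ terminating exactly at $(x^{(1)},x^{(2)})$ at time $t_1$. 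In short: you already observed that (b) ``freezes'' a block resting at a multiple of its ground state, but you exploit this only before the shaping phase; using it \emph{during} the other block's shaping phase, as a parking manoeuvre, removes the need for any pre-shaping or matching and closes the gap.
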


\begin{proof}[Sketch of Proof]
We only have to prove this for $k=2$ as the general case can be obtained by induction. 

First, note that starting from $e_{1}$ one can reach every state 
of the form $(r e_{1},(1-r)e_{1})\in\mathbb R^{\alpha_1}\times\mathbb R^{\alpha_2}=\mathbb R^{\alpha_1+\alpha_2}$ with $r\in [0,1]$. This is easily
achieved via (a) and appropriate permutations.


Secondly, consider an arbitrary target $ x\in\Delta^{\alpha_1+\alpha_2-1}$ which of course can be 
decomposed into $ x= (x_1,x_2)$ with $ x_j\in\mathbb R_+^{\alpha_j}$.
Again by (a) we know that there exits switching sequences and permutations such that
the dissipation operator $Y_j$ interlaced with these permutations drives 
$({\bf 1}^T x_j)e_{1}$ to $ x_j$ in time $t_j\in\mathbb R_+$ for $j=1,2$. 
Assume w.l.o.g.~$t_1\geq t_2$. 

Then starting from
$(({\bf 1}^T x_1)e_{1}, ({\bf 1}^T x_2)e_{1})$ 
the control scheme goes as follows: 
Run on $({\bf 1}^T x_1)e_{1}$ the switching sequence which steers to 
$ x_1$ in time $t_1$. Stay in $({\bf 1}^T x_2)e_{1}$ till $(t_1-t_2)$ which is 
possible by (b) and then, for the remaining time, run in parallel on the second system the (shifted) switching sequence which steers to 
$ x_2$. Thus at time $t_1$ we reach $ (x_1,  x_2)= x$ which concludes the proof. 
\end{proof}

\begin{proof}[Proof of Thm.~\ref{thm_2}]
The case $n=1$ is covered by Thm.\ref{thm_1} so we may assume $n>1$. 

Let $\varepsilon>0$ and $ x_0,x \in\Delta^{n^m-1}$. We have to find $ x_F\in \mathfrak{reach}_{\Lambda_{0,loc}}
( x_0)$ such that $\| x- x_F\|_1<\varepsilon$. The proof, similar to that of Thm.~\ref{thm_1}, 
consists of the following steps:
\begin{equation}\label{eq:steps_idea_2}
 x_0\overset{\text{Step }1}\longrightarrow e_{1} \overset{\text{Step }2}
\longrightarrow  x\,.
\end{equation}


For applying Lemma \ref{lemma_reach_oplus} in Step 2 check that
$Y_j=B_0(\hGA_0)$ from \eqref{eq:6b} for $j =1, \dots, n^{m-1}$
satisfies conditions (a) and (b), which obviously hold due to Thm.~\ref{thm_1}, Remark 
\ref{rem_reach_exact} and Eq.~\eqref{eq:6b}\footnote{Here we use $I_k\otimes\sigma_+ = \operatorname{diag}(\sigma_+,\ldots,\sigma_+)$ which for any $X\in\posdiag{kn}$ (when decomposed into $X=\operatorname{diag}(X_1,\ldots,X_k)$) implies $\hGA_{I_k\otimes\sigma_+}(X)=\operatorname{diag}(\hGA_0(X_1),\ldots,\hGA_0(X_k))$ and thus $B_0(\hGA_{I_k\otimes\sigma_+})= \operatorname{diag}(B_0(\hGA_0),\ldots,B_0(\hGA_0))$ as is readily verified.}.

%
%
Thus we know $\mathfrak{reach}_{\Lambda_{0,loc}}(e_1)=\Delta^{n^m-1}$ and in 
particular $ x\in \mathfrak{reach}_{\Lambda_{0,loc}}(e_1)$. 
For the first step in \eqref{eq:steps_idea_2}, we may decompose $ x_0$ into $( x_{1},\ldots,x_{n^{m-1}})$ with $ x_j\in\mathbb R_+^{n}$. Then
\begin{align*}
\lim_{t\to\infty}e^{-tB_0}x_0&=\lim_{t\to\infty}\big(e^{-tB_0(\hGA_0)} x_{1},\ldots,e^{-tB_0(\hGA_0)} x_{n^{m-1}}\big)\\
&=\big( ({\bf 1}^T x_{1})e_1, \ldots, ({\bf 1}^T x_{n^{m-1}})e_1\big)
\end{align*}
by \eqref{eq:noise_inf_time}. Then applying an appropriate permutation yields
$$
\big(  ({\bf 1}^T x_{1}),\ldots,({\bf 1}^T x_{n^{m-1}}),0_{n^m-n^{m-1}}\big)\in\Delta^{n^m-1}\,.
$$
Repeating this scheme $m$ times in total leaves us with
$$
\Big(  \sum\nolimits_{j=1}^{n^{m-1}}({\bf 1}^T x_{j}),0_{n^m-1}\Big)=e_1
$$
because $ x_0\in\Delta^{n^m-1}$ so ${\bf 1}^T x_0=\sum_{j=1}^{n^{m-1}}{\bf 1}^T  x_{j}=1$. 
Clearly, above limits (for $t\to\infty$) cannot be reached exactly, 
yet again by \eqref{eq:noise_inf_time}, for every $y\in\mathbb R^{n}$ we find 
$t \geq 0$ such that
$$
\|({\bf 1}^T y)e_1-e^{-sB_0(\hGA_0)}(y)\|_1<\tfrac{\varepsilon}{m\cdot n^{m-1}}\,.
$$
for all $s \geq t$. This yields $ x_F\in \mathfrak{reach}_{\Lambda_{0,loc}}( x_0)$ with 
$$
\|e_1- x_0^F\|_1<m\cdot\big( n^{m-1}\cdot\tfrac{\varepsilon}{m\cdot n^{m-1}}\big)=\varepsilon
$$ 
as each of the $m$ relaxation steps has precision $\frac{\varepsilon}{m}$.
\end{proof}

Let us quickly describe how the previous theorems pertain to the quantum realm.

\begin{corollary}\label{cor:toy_quantum}
Let $\Sigma$  be a coherently controlled quantum system of the form (5), which satisfies
Assumption UC+S or FUC, as well as IN. Moreover, let $\hGA_0$ or $\hGA_{0,loc}$ be given as above.
Then one can (approximately)
reach every other state, i.e. 
$$\overline{\mathfrak{reach}_{\Sigma}(\rho_0)}=\pos n\,.$$
\end{corollary}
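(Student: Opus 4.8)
The plan is to lift the simplex-level reachability of Theorem~\ref{thm_1} (resp.\ Theorem~\ref{thm_2}) to the quantum system $\Sigma$ by combining the unitary controllability supplied by UC+S (or FUC) with the bridging observation recorded in the ``Conclusion for Quantum Systems''. The guiding idea is the spectral decomposition of density matrices: every target splits into an eigenvalue vector living in the simplex, which the dissipative toy-model dynamics reaches, and an eigenbasis, which the coherent controls supply.

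First I would reduce to the diagonal. As $\rho_0\in\pos n$ is positive semidefinite of unit trace, there is a unitary $U$ and an eigenvalue vector $x_0\in\Delta^{n-1}$ with $\rho_0=U\diag(x_0)U^\dagger$. Under UC+S one switches off the dissipation ($\gamma=0$) and steers along the full unitary orbit $\{V\rho_0 V^\dagger\mid V\in SU(n)\}$, so in particular $\diag(x_0)$ is reachable; under FUC the same orbit is reached in the limit of fast, strong Hamiltonian controls. The decisive point is then that every permutation matrix is unitary and acts on a diagonal state by $\pi\diag(x)\pi^\dagger=\diag(\pi x)$, so each permutation control of the toy model is realised inside $\Sigma$ by coherent controls. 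Combined with IN---which guarantees that the dissipative phases keep diagonal states diagonal and act there exactly as $e^{-tB_0}$ on $\Delta^{n-1}$---interleaving these coherent permutations with the switchable dissipation faithfully reproduces the hybrid dynamics of $\Lambda_0$ (resp.\ $\Lambda_{0,loc}$) on $\posdiag n$.

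By Theorem~\ref{thm_1} (resp.\ Theorem~\ref{thm_2}) the toy-model reachable set $\mathfrak{reach}_{\Lambda_0}(x_0)$ is dense in $\Delta^{n-1}$ (resp.\ $\Delta^{n^m-1}$). Invoking the bridging statement, the closure of the unitary orbit of this set under $x\mapsto\diag(x)$ lies in $\overline{\mathfrak{reach}_\Sigma(\rho_0)}$; but this orbit equals $\{V\diag(x)V^\dagger\mid V\in SU(n),\,x\in\Delta^{n-1}\}$, which by the spectral theorem is all of $\pos n$. Hence $\pos n\subseteq\overline{\mathfrak{reach}_\Sigma(\rho_0)}$, and the reverse inclusion is automatic because the unitary flow and the \gks-semigroup both leave the compact set $\pos n$ invariant.

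I expect the main obstacle to lie in the FUC scenario, where $\gamma\equiv\gamma_0>0$ cannot be switched off: there the permutations and the diagonalising rotation must be synthesised from near-instantaneous strong Hamiltonian controls, and one has to verify that the dissipative contribution accrued during such rotations vanishes in the limit, so that the genuine toy-model trajectory is recovered up to arbitrarily small trace-norm error. Under UC+S this difficulty disappears, since $\gamma$ is a genuine on/off control and the alternation between pure unitary steering and pure dissipation is exact.
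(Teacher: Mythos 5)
Your proposal is correct and follows essentially the same route as the paper's own sketch: diagonalise $\rho_0$ via the unitary controllability granted by UC+S (or FUC), emulate the toy model on $\posdiag{n}\leftrightarrow\Delta^{n-1}$ using permutation unitaries interleaved with the dissipator (justified by IN), invoke Theorem~\ref{thm_1} or~\ref{thm_2}, and rotate back, with the spectral theorem giving all of $\pos n$. Your closing remark on the FUC case (needing fast strong controls so that the dissipation accrued during rotations vanishes in the limit) is a genuine subtlety that the paper's sketch leaves implicit, so flagging it is a point in your favour rather than a deviation.
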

\begin{proof}[Sketch of Proof]
Starting from any 
$\rho_0\in\pos n$, due to Assumption UC+S (where the {\gks} noise is switched off) or FUC we can unitarily 
transform $\rho_0\to U\rho_0U^\dagger$ such that it is diagonal in an eigenbasis of $H_0$. This in particular 
means $[H_0,U\rho_0U^\dagger]=0$ so we are in the diagonal case (cf.~footnote \ref{footnote_H_0_gen_diag}) 
with effectively no coherent drift but only dissipation and coherent controls, 
i.e.~in the realm of the toy model via the obvious one-to-one correspondence $\posdiag n \leftrightarrow
\Delta^{n-1}$. Here one can (approximately) reach every other diagonal state which 
by finally rotating back gives the desired result for the respective 
control system $\Sigma$.
\end{proof}


\subsection{Reachability Results for Non-Zero Temperature}

Now for all temperatures $T\in[0,\infty]$, in the qubit case (bath coupling $\Sigma_d$ with UC+S or FUC) the
closure of the reachable set for any initial state $ \rho_0\in\pos2$ equals
$$
\lbrace \rho\in\pos{2}\,|\, \rho\prec  \rho_\text{Gibbs} \vee  \rho\prec \rho_0\rbrace
$$
as can be seen easily, cf.~\cite{RBR12}. One might hope that this extends to general $n$-level systems with $n>2$
at finite temperatures. However, this is not true even if the above is taken as an upper bound for the reachable
set, as the following example shows.

\begin{example}
Let
$$
d=\frac1{e^{0.64}+1+e^{-0.64}}\begin{pmatrix} e^{0.64}\\1\\e^{-0.64} \end{pmatrix}\approx\begin{pmatrix}0.5539\\0.2921\\0.1540
\end{pmatrix}\in\Delta^2\,.
$$ 
Then for $ \rho_0 = \operatorname{diag}(0.55,0.4,0.05) \in\pos 3$ and the semigroup 
$(e^{-t\hGA_d})_{t\in\mathbb R_+}$ (cf.~Prop.~\ref{thm_bath}) one gets for $t=1$
$$
e^{-\hGA_d}( \rho_0)=\operatorname{diag}\Big(e^{-B_0(\hGA_d)}\begin{pmatrix}0.55\\0.4\\0.05  \end{pmatrix}\Big)\approx\operatorname{diag}\begin{pmatrix}0.5783\\0.3098\\0.1119\end{pmatrix}\,.
$$
Evidently, $e^{-\hGA_d}( \rho_0)\not\prec \operatorname{diag}(d) = \rho_\text{Gibbs}$ and 
$e^{-\hGA_d}( \rho_0)\not\prec  \rho_0$.
\end{example}

To obtain some analytic results we restrict ourselves to the case of equidistant energy
levels (cf.~Sec.~\ref{sec:particular_models}). Thus $d$ is of the form
\begin{equation}\label{eq:equidist_d_vec}
d=\frac{1-\alpha}{1-\alpha^n}\begin{pmatrix} 1\\\alpha\\\vdots\\\alpha^{n-1} \end{pmatrix}
\end{equation}
for some $\alpha\in(0,1)$. This includes the so-called diagonal spin case. \vspace{6pt}

\noindent\textbf{Theorem \ref{thm_3}.} {\it Let $n\in\mathbb N$ and $d\in\Delta^{n-1}$ such that $\frac{d_{j+1}}{d_j}$
is constant for $j=1,\ldots,n-1$. Also let $\hGA_d$ be induced by $\sigma_-^d,\sigma_+^d$ (cf.~Prop.~\ref{thm_bath}). 
Then the reachable set for the toy model $\Lambda_d$ with $B_0(\hGA_d)$
satisfies}
\begin{equation*}
\mathfrak{reach}_{\Lambda_d}(d)\subseteq \lbrace x\in\Delta^{n-1}\,|\, x\prec d \rbrace\,.
\end{equation*}

\noindent
Note that $d$ is the unique fixed point of $(e^{-tB_0(\hGA_d)})_{t \in \mathbb R_+}$.

\begin{lemma}\label{lemma_perm_partition}
Let $n,k\in\mathbb N$ with $k\leq n$ and let $\pi$ be any permutation on $\lbrace1,\ldots,n\rbrace$. 
Then there exist unique non-empty subsets $\square_1,\ldots,\square_q\subseteq\pi(\lbrace1,\ldots,k\rbrace)$
(henceforth called ``blocks'') 
with the following properties.\vspace{3pt}
\begin{itemize}
\item[(i)] The blocks $\square_1,\ldots,\square_q$ yield a disjoint partition of 
$\pi(\lbrace1,\ldots,k\rbrace)$, i.e.~$\square_{j}\cap\square_l=\emptyset$ for $j \neq l$
and $\bigcup_{j=1}^q \square_j=\pi(\lbrace1,\ldots,k\rbrace)$. \vspace{4pt}
\item[(ii)] The blocks are the ``connected components'' of $\pi(\lbrace1,\ldots,k\rbrace)$. 
More precisely, for each $j\in\lbrace 1,\ldots,q\rbrace$ there exist $b_j^-,b_j^+\in\lbrace 1,\ldots,n \rbrace$ 
such that
$$
\square_j= \lbrace b_j^-, b_j^- +1, \ldots, b_j^+-1, b_j^+\rbrace
$$
and $b_j^--1, b_j^++1 \notin\pi(\lbrace1,\ldots,k\rbrace)$ so the nearest neighbours of the blocks 
are not in $\pi(\lbrace1,\ldots,k\rbrace)$.
\end{itemize}
\end{lemma}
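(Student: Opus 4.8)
The plan is to first strip away everything that is irrelevant. The permutation $\pi$ plays no role whatsoever: $S:=\pi(\lbrace 1,\ldots,k\rbrace)$ is simply an arbitrary subset of $\lbrace 1,\ldots,n\rbrace$ of cardinality $k$, and it is nonempty since $k\geq 1$. I would therefore open by making this reduction explicit and then forget $\pi$ entirely, so that the lemma becomes the purely combinatorial assertion that every nonempty subset $S\subseteq\lbrace 1,\ldots,n\rbrace$ decomposes uniquely into its maximal runs of consecutive integers.

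For the construction, I would attach to each $r\in S$ the block $B_r$, defined as the maximal interval of consecutive integers that contains $r$ and is contained in $S$; concretely, one extends $r$ as far to the left and as far to the right as possible while staying inside $S$. Since $S$ is finite both extensions terminate, so $B_r=\lbrace b^-(r),\ldots,b^+(r)\rbrace$ is well defined, is an interval lying in $S$, and by maximality of the two extensions satisfies $b^-(r)-1\notin S$ and $b^+(r)+1\notin S$. This already builds in property (ii), and the candidate blocks $\square_1,\ldots,\square_q$ are then declared to be the distinct sets among $\lbrace B_r\,|\,r\in S\rbrace$.

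The linchpin, which simultaneously delivers the partition property (i) and the uniqueness of the whole family, is a single sub-claim: a maximal consecutive interval in $S$ is determined by any one of its points. Precisely, if $[a,b]$ and $[a',b']$ are intervals contained in $S$ that share a common point and satisfy $a-1,a'-1,b+1,b'+1\notin S$, then $a=a'$ and $b=b'$. I would prove this by contradiction: assuming $a<a'$ forces $a'-1$ into $[a,b]\subseteq S$, contradicting $a'-1\notin S$, and symmetrically on the right. Granting the sub-claim, any two blocks $B_r,B_{r'}$ that intersect must coincide (apply it to a shared point), so the distinct $B_r$ are pairwise disjoint and cover $S$, which is (i). For uniqueness, any family satisfying (i) and (ii) consists, by (ii), of maximal consecutive intervals in $S$; by the sub-claim each such interval equals $B_r$ for every one of its points $r$, so that family is exactly the family of distinct $B_r$.

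I do not expect a genuine obstacle, as the content is elementary; the only real work is bookkeeping. The one point to handle carefully is the sub-claim above, precisely because it does double duty (partition plus global uniqueness) and must be phrased for intervals whose endpoints are non-extendable in $S$ rather than for the outputs of one fixed construction — once it is isolated, everything else is immediate. As the structural reason behind this elementary argument, I would remark that one may equivalently put a graph on $S$ with an edge between integers differing by $1$ and take the $\square_j$ to be its connected components, whereupon the lemma reduces to the standard facts that connected components partition the vertex set and that a connected subset of the integer path is an interval.
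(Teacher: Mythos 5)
Your proof is correct. Note that the paper does not actually prove Lemma~\ref{lemma_perm_partition}: it only gives an illustrative example and declares the proof ``evident,'' the intended content being exactly the decomposition of $\pi(\{1,\dots,k\})$ into maximal runs of consecutive integers (equivalently, connected components of the nearest-neighbour graph) that you formalize. Your write-up -- in particular isolating the sub-claim that a non-extendable interval in $S$ is determined by any one of its points, which yields disjointness and uniqueness in one stroke -- supplies precisely the bookkeeping the paper omits.
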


Instead of proving the above lemma, let us quickly illustrate what is going on here by considering an example. 
Then a proof will be evident.
\begin{example}
Let $\pi$ be the permutation (in cycle notation) 
$
\pi = (1,6,2,3,4)(5)
$ 
on $\lbrace 1,\ldots,6\rbrace$. First, consider $k=3$ so $\pi(\lbrace 1,2,3\rbrace)=\lbrace 3,4,6\rbrace$. 
The connected block-components of this set are $\square_1=\lbrace 3,4\rbrace$, $\square_2=\lbrace6\rbrace$ 
which satisfy 
$$
\square_1\cap\square_2 =\emptyset \quad\text{and}\quad \square_1\cup\square_2=\pi(\lbrace 1,2,3\rbrace)
$$ 
and neither of their neighbouring numbers (i.e.~$2,5,7$) are contained within 
$\pi(\lbrace 1,2,3\rbrace)$. To finish off this example, for $k=5$ one gets $\pi(\lbrace 1,2,3,4,5\rbrace)
=\lbrace 1,3,4,5,6\rbrace$. Here, the blocks obviously are 
$\square_1=\lbrace 1\rbrace$, $\square_2=\lbrace3,4,5,6\rbrace$.
\end{example}

\begin{proof}[Proof of Thm.~\ref{thm_3}]
Using \eqref{eq:action_gamma_Delta} and \eqref{eq:equidist_d_vec} for $B_0=B_0(\hGA_d)$ gives
\begin{equation}\label{eq:75}
B_0 = \begin{pmatrix} c_1\alpha&-c_1&&&&\\
-c_1\alpha&c_1+c_2\alpha&-c_2&&\\
&-c_2\alpha&c_2+c_3\alpha&-c_3&\\
&&\ddots&\ddots&\ddots\\
 \end{pmatrix}
\end{equation}
with $c_j:=j(n-j)/(1+\alpha)\geq 0$. In order to show that $\mathfrak{reach}_{\Lambda_d}(d)$ is upper bounded 
by $\lbrace x\in\Delta^{n-1}\,|\, x\prec  d\rbrace$ one has to show that the latter
\begin{itemize}
\item[(a)] contains the initial state.\vspace{4pt}
\item[(b)] is invariant under permutation channels.\vspace{4pt}
\item[(c)] is invariant under the semigroup $(e^{-tB_0})_{t \in \mathbb R_+}$.
\end{itemize}
As (a) and (b) are evident we only have to show (c). As $\exp(-tB_0)$ is linear and the 
set $\lbrace x\in\Delta^{n-1}\,|\, x\prec  d\rbrace$ is a convex 
set, it suffices to prove that the semigroup acts contractively on its extreme points 
$\pi d$, where $\pi$ denotes any permutation matrix.

Thus, we have to show that for every permutation matrix $\pi$ there exists $t_0 > 0$ such that
\begin{equation}\label{eq:76}
\exp(-tB_0)\pi d\prec d \quad \text{for all $t\in [0,t_0)$}\,.
\end{equation}
Again the fact that $\lbrace x\in\Delta^{n-1}\,|\, x\prec  d\rbrace$
is a compact, convex polytope implies that \eqref{eq:76} can be replaced by the tangential condition
\begin{equation}\label{eq:77}
\forall_{\pi\in\mathcal S_n}\, \exists_{\mu>0}\quad( I_n-\mu B_0) \pi d\prec d\,.
\end{equation}
Let $k\in\lbrace 1,\ldots,n\rbrace$ be arbitrary and consider the ``connected components''
$\square_1,\ldots,\square_q$ of the set 
$\pi(\lbrace 1,\ldots,k\rbrace)$, cf.~Lemma 
\ref{lemma_perm_partition}. Then via \eqref{eq:equidist_d_vec} and due to $\alpha\in(0,1)$ the $k$-th 
majorization condition (partial sum condition) entails
\begin{equation}\label{eq:78}
\sum\nolimits_{j=0}^{k-1} \alpha^j=\sum\nolimits_{j=1}^{k} d_j=\sum\nolimits_{j=1}^{k} (\pi d)_{\pi(j)}\,,
\end{equation}
where $\pi$ can be any permutation matrix. The respective partial sum of 
$B_0\pi d$ is given by
$$
\sum\nolimits_{j=1}^{k} (B_0\pi d)_{\pi(j)} = 
\sum\nolimits_{j=1}^q \sum\nolimits_{a\in\square_j} (B_0\pi d)_{a} \,.
$$
If we can show that every $\square_j$-sum individually yields something non-negative, then applying
$( I_n-\mu B_0)$ (for $\mu>0$ sufficiently small) to $\pi d$ can make the 
sum in \eqref{eq:78} only smaller and thus \eqref{eq:77} holds.

Using \eqref{eq:75} and the properties of the $\square_j$ (cf.~Lemma \ref{lemma_perm_partition}),
\begin{align}
&\sum\nolimits_{a\in\square_j} (B_0\pi d)_{a}=\sum\nolimits_{k=b_j^-}^{b_j^+}(B_0\pi d)_{k}\notag\\
&=\sum\nolimits_{k=b_j^-}^{b_j^+}c_{k-1}\big( (\pi d)_{k}-\alpha (\pi d)_{k-1})\notag\\
&\hphantom{=\sum\nolimits_{k=b}}+c_{k}\big(\alpha(\pi d)_{k}- (\pi d)_{k+1})\big)\notag\\
&=c_{b_j^--1}\big((\pi d)_{b_j^-}-\alpha (\pi d)_{b_j^--1}) \label{eq:79a}\\
&\hphantom{=} + c_{b_j^+}\big(\alpha (\pi d)_{b_j^+}-(\pi d)_{b_j^++1})\big)\,. \label{eq:79b}
\end{align}
We know that $b_j^-,b_j^+\in\pi(\lbrace 1,\ldots,k\rbrace)\not\ni b_j^--1,b_j^++1$ so the degrees 
of the involved $\alpha$-monomials satisfy a strict inequality, 
i.e.~$\operatorname{deg}_\alpha((\pi d)_{b_j^-})<\operatorname{deg}_\alpha((\pi d)_{b_j^- -1})$ (and similarly 
for the other one) using \eqref{eq:equidist_d_vec}. Because of $\alpha\in (0,1)$ and $c_j\geq 0$ for all $j$,
the summands involved in \eqref{eq:79a} and \eqref{eq:79b} are non-negative which concludes the
proof.
\end{proof}
%

One might wonder whether it is necessary to restrict oneself to Hamiltonians with equidistant eigenvalues.
The following example gives a positive answer.
\begin{example}
Let $$d=\frac1{ 1+e^{-1/4}+e^{-17/4}}{\begin{pmatrix} 1\\e^{-1/4}\\e^{-17/4} \end{pmatrix}}\approx{\begin{pmatrix}0.5577\\0.4343\\0.0080
\end{pmatrix}}\in\Delta^2$$ 
so the semigroup $( e^{-tB_0})_{t\in\mathbb R_+}$ (cf.~Prop.~\ref{thm_bath}) acts like
$$
e^{-tB_0}{\begin{pmatrix} 0.0080\\0.5577\\0.4343\end{pmatrix} }\approx{\begin{pmatrix}0.0683\\0.5730\\0.3587\end{pmatrix}}\quad\text{  for }t=1/10\,.
$$
Therefore majorization is violated (the largest eigenvalue grows) and the set $\lbrace x\in
\Delta^{n-1}\,|\, x\prec  d\rbrace$ is not left invariant by $(e^{-tB_0})_{t\in\mathbb R_+}$, 
although $d$ satisfies the ``physical'' ordering condition of Sec.~\ref{sec:particular_models}.
\end{example}

\section{CONCLUSIONS}
Within the framework of bilinear control systems, we have described reachable sets for coherently 
controllable Markovian quantum systems in two scenarios: either (i) with switchable Markovian dissipation 
on top of unitary control  or (ii) with (arbitrarily) fast full unitary control and constant Markovian 
noise as drift. In either scenario, the dissipation can be thought of as coupling to a bath of temperature $T$. 

For $T=0$ we have shown that the reachable set encompasses the set of all states (density operators) no matter what 
the initial state is. The result thus generalises previous findings for $m$~qubits \cite{BSH16} to general
$n$-level systems on one hand or general $m$-qu{\em d}it systems on the other. 

For coupling to baths of finite temperatures $T>0$, we have given an inclusion for the reachable set in a 
certain class of initial states. This generalises results on unital dissipative quantum systems \cite{Yuan10}, 
where the bath can be thought of as being in the high-temperature limit $T\to\infty$. 
\addtolength{\textheight}{-81mm}   

Extending the current 
results on finite temperatures $T>0$ to further classes of initial states is an obvious yet 
challenging idea to follow-up.







\bibliographystyle{IEEEtran} 
\bibliography{control21vJan19}

\begin{thebibliography}{10}
\providecommand{\url}[1]{#1}
\csname url@rmstyle\endcsname
\providecommand{\newblock}{\relax}
\providecommand{\bibinfo}[2]{#2}
\providecommand\BIBentrySTDinterwordspacing{\spaceskip=0pt\relax}
\providecommand\BIBentryALTinterwordstretchfactor{4}
\providecommand\BIBentryALTinterwordspacing{\spaceskip=\fontdimen2\font plus
\BIBentryALTinterwordstretchfactor\fontdimen3\font minus
  \fontdimen4\font\relax}
\providecommand\BIBforeignlanguage[2]{{%
\expandafter\ifx\csname l@#1\endcsname\relax
\typeout{** WARNING: IEEEtran.bst: No hyphenation pattern has been}%
\typeout{** loaded for the language `#1'. Using the pattern for}%
\typeout{** the default language instead.}%
\else
\language=\csname l@#1\endcsname
\fi
#2}}

\bibitem{DowMil03}
J.~P. Dowling and G.~Milburn, ``{Quantum Technology: The Second Quantum
  Revolution},'' \emph{Phil. Trans. R. Soc. Lond. A}, vol. 361, pp. 1655--1674,
  2003.

\bibitem{Roadmap2015}
S.~Glaser, U.~Boscain, T.~Calarco, C.~Koch, W.~K{\"o}ckenberger, R.~Kosloff,
  I.~Kuprov, B.~Luy, S.~Schirmer, T.~Schulte-Herb{\"u}ggen, D.~Sugny, and
  F.~Wilhelm, ``{Training Schr{\"o}odinger’s Cat: Quantum Optimal Control},''
  \emph{Eur. Phys. J. D}, vol.~69, p. 279, 2015.

\bibitem{Elliott09}
D.~Elliott, \emph{{Bilinear Control Systems: Matrices in Action}}.\hskip 1em
  plus 0.5em minus 0.4em\relax Springer, London, 2009.

\bibitem{SJ72}
H.~Sussmann and V.~Jurdjevic, ``{Controllability of Nonlinear Systems},''
  \emph{J. Diff. Equat.}, vol.~12, pp. 95--116, 1972.

\bibitem{JS72}
V.~Jurdjevic and H.~Sussmann, ``{Control Systems on {L}ie Groups},'' \emph{J.
  Diff. Equat.}, vol.~12, pp. 313--329, 1972.

\bibitem{Bro72}
R.~W. Brockett, ``{System Theory on Group Manifolds and Coset Spaces},''
  \emph{SIAM J. Control}, vol.~10, pp. 265--284, 1972.

\bibitem{dAll08}
D.~D'Alessandro, \emph{{Introduction to Quantum Control and Dynamics}}.\hskip
  1em plus 0.5em minus 0.4em\relax Chapman \& Hall/CRC, Boca Raton, 2008.

\bibitem{DiHeGAMM08}
G.~Dirr and U.~Helmke, ``{Lie Theory for Quantum Control},''
  \emph{GAMM-Mitteilungen}, vol.~31, pp. 59--93, 2008.

\bibitem{DHKS08}
G.~Dirr, U.~Helmke, I.~Kurniawan, and T.~Schulte-Herbr{\"u}ggen,
  ``{L}ie-semigroup structures for reachability and control of open quantum
  systems: {K}ossakowski-{L}indblad generators form {L}ie wedge to {M}arkovian
  channels,'' \emph{Rep. Math. Phys.}, vol.~64, pp. 93--121, 2009.

\bibitem{Mart09}
M.~Hofheinz, H.~Wang, M.~Ansmann, R.~C. Bialczak, E.~Lucero, M.~Neeley, A.~D.
  O'Connell, D.~Sank, J.~Wenner, J.~M. Martinis, and A.~N. Cleland,
  ``{Synthesizing Arbitrary Quantum States in a Superconducting Resonator},''
  \emph{Nature}, vol. 459, pp. 546--549, 2009.

\bibitem{Mart13}
Y.~Yin, Y.~Chen, D.~Sank, P.~J.~J. O'Malley, T.~C. White, R.~Barends, J.~Kelly,
  E.~Lucero, M.~Mariantoni, A.~Megrant, C.~Neill, A.~Vainsencher, J.~Wenner,
  A.~N. Korotkov, A.~N. Cleland, and J.~M. Martinis, ``Catch and release of
  microwave photon states,'' \emph{Phys. Rev. Lett.}, vol. 110, p. 107001,
  2013.

\bibitem{Mart14}
Y.~Chen, C.~Neill, P.~Roushan, N.~Leung, M.~Fang, R.~Barends, J.~Kelly,
  B.~Campbell, Z.~Chen, B.~Chiaro, A.~Dunsworth, E.~Jeffrey, A.~Megrant, J.~Y.
  Mutus, P.~J.~J. O’Malley, C.~M. Quintana, D.~Sank, A.~Vainsencher,
  J.~Wenner, T.~C. White, M.~R. Geller, A.~N. Cleland, and J.~M. Martinis,
  ``{Qubit Architecture with High Coherence and Fast Tunable Coupling},''
  \emph{Phys. Rev. Lett}, vol. 113, p. 220502, 2014.

\bibitem{McDermott_TunDissip_2019}
C.~Wong, C.~Wilen, R.~McDermott, and M.~Vavilov, ``{A Tunable Quantum
  Dissipator for Active Resonator Reset in Circuit QED},'' \emph{Quant.~Sci.
  Technol.}, vol.~4, p. 025001, 2019.

\bibitem{BSH16}
V.~Bergholm, F.~Wilhelm, and T.~Schulte-Herbr{\"u}ggen, ``{Arbitrary $n$-Qubit
  State Transfer Implemented by Coherent Control and Simplest Switchable Local
  Noise},'' 2016, \url{https://arxiv.org/abs/1605.06473v2}.

\bibitem{book_impulsive89}
V.~Lakshmikantham, D.~D. Bainov, and P.~S. Simeonov, \emph{{Theory of Impulsive
  Differential Equations}}, ser. Series in Modern Applied Mathematics,
  Vol.~6.\hskip 1em plus 0.5em minus 0.4em\relax Singapore: World Scientific,
  1989.

\bibitem{Leela1991}
S.~Leela, F.~A.~A. McRae, and S.~Sivasundaram, ``{Controllability of Impulsive
  Differential Equations},'' \emph{J. Math. Anal. Appl.}, vol. 177, pp. 24--30,
  1993.

\bibitem{book_HybridSytems96}
R.~Alur, T.~A. Henzinger, and E.~D. Sontag, \emph{{Hybrid Systems III:
  Verification and Control}}, ser. Lecture Notes in Computer Science (LNCS),
  Vol.~1066.\hskip 1em plus 0.5em minus 0.4em\relax New York: Springer, 1996.

\bibitem{HJ2}
R.~A. Horn and C.~R. Johnson, \emph{Topics in Matrix Analysis}.\hskip 1em plus
  0.5em minus 0.4em\relax Cambridge University Press, Cambridge, 1991.

\bibitem{MarshallOlkin}
A.~Marshall, I.~Olkin, and B.~Arnold, \emph{{Inequalities: Theory of
  Majorization and Its Applications}}, 2nd~ed.\hskip 1em plus 0.5em minus
  0.4em\relax Springer, New York, 2011.

\bibitem{Ando89}
T.~Ando, ``{Majorization, Doubly Stochastic Matrices, and Comparison of
  Eigenvalues},'' \emph{Lin. Alg. Appl.}, vol. 118, pp. 163--248, 1989.

\bibitem{GKS76}
V.~Gorini, A.~Kossakowski, and E.~Sudarshan, ``{Completely Positive Dynamical
  Semigroups of $N$-Level Systems},'' \emph{J. Math. Phys.}, vol.~17, pp.
  821--825, 1976.

\bibitem{Lind76}
G.~Lindblad, ``{On the Generators of Quantum Dynamical Semigroups},''
  \emph{Commun. Math. Phys.}, vol.~48, pp. 119--130, 1976.

\bibitem{wolf06}
D.~P{\'e}rez-Garc{\'i}a, M.~M. Wolf, D.~Petz, and M.~Ruskai, ``{Contractivity
  of Positive and Trace-Preserving Maps under ${L}_p$-Norms},'' \emph{J. Math.
  Phys.}, vol.~47, 2006.

\bibitem{Yuan10}
H.~Yuan, ``Characterization of majorization monotone quantum dynamics,''
  \emph{IEEE. Trans. Autom. Contr.}, vol.~55, pp. 955--959, 2010.

\bibitem{KDH12}
I.~Kurniawan, G.~Dirr, and U.~Helmke, ``{Controllability Aspects of Quantum
  Dynamics: A Unified Approach for Closed and Open Systems},'' \emph{IEEE
  Trans. Autom. Contr. (IEEE-TAC)}, vol.~57, pp. 1984--1996, 2012.

\bibitem{Bro73}
R.~W. Brockett, ``{Lie Theory and Control Systems Defined on Spheres},''
  \emph{SIAM J. Appl. Math.}, vol.~25, pp. 213--225, 1973.

\bibitem{Khaneja01b}
N.~Khaneja, R.~Brockett, and S.~J. Glaser, ``{Time Optimal Control in Spin
  Systems},'' \emph{Phys. Rev. A}, vol.~63, p. 032308, 2001.

\bibitem{rooney2018}
P.~Rooney, A.~Bloch, and C.~Rangan, ``{Steering the Eigenvalues of the Density
  Operator in Hamiltonian-Controlled Quantum Lindblad Systems},'' \emph{IEEE
  Trans. Automat. Control}, vol.~63, pp. 672--681, 2018.

\bibitem{HJ1}
R.~A. Horn and C.~R. Johnson, \emph{Matrix Analysis}.\hskip 1em plus 0.5em
  minus 0.4em\relax Cambridge University Press, Cambridge, 1987.

\bibitem{Fagnola2015}
F.~Fagnola and C.~Mora, ``{On the Relationship between a Quantum Markov
  Semigroup and its Representation via Linear Stochastic Schr{\"o}dinger
  Equations},'' \emph{Indian J. Pure Appl. Math.}, vol.~46, pp. 399--414, 2015.

\bibitem{AlickiLendi07}
R.~Alicki and K.~Lendi, \emph{{Quantum Dynamical Semigroups and Applications}},
  2nd~ed., ser. Lecture Notes in Physics, Vol.~286.\hskip 1em plus 0.5em minus
  0.4em\relax Springer, Berlin, 2007.

\bibitem{RBR12}
P.~Rooney, A.~M. Bloch, and C.~Rangan, ``{Decoherence Control and Purification
  of Two-Dimensional Quantum Density Matrices under Lindblad Dissipation},''
  2012.

\end{thebibliography}

\end{document}